\theoremstyle{plain}
\newtheorem{theorem}{Theorem}[section]
\newtheorem{lemma}[theorem]{Lemma}
\newtheorem{corollary}[theorem]{Corollary}
\theoremstyle{definition}
\theoremstyle{remark}
\def\a{\mathcal{P}_E(P_i) }
\def\c{{\mathcal{P}_E(G)}}
\def\d{\mathcal{P}_E^*(G)}
\def\g{$G$ }
\def\m{\mathcal{M}(G)}
\def\G{\Gamma}
\begin{document}
\title[Certain properties of the enhanced power graph associated with a finite group]{Certain properties of the enhanced power graph associated with a finite group}



\author[Parveen, Jitender Kumar, Siddharth Singh, Xuanlong Ma]{Parveen, $\text{Jitender Kumar}^{^*}$, Siddharth Singh, Xuanlong Ma}
\email{p.parveenkumar144@gmail.com,jitenderarora09@gmail.com,sidharth$\_$0903@hotmail.com,xuanlma@mail.bnu.edu.cn}

\begin{abstract}
The enhanced power graph of a finite group $G$, denoted by $\c$, is the simple undirected graph whose vertex set is $G$ and two distinct vertices $x, y$ are adjacent if $x, y \in \langle z \rangle$ for some $z \in G$. In this article, we determine all finite groups such that the minimum degree and the vertex connectivity of $\c$ are equal. Also, we classify all groups whose (proper) enhanced power graphs are strongly regular. Further, the vertex connectivity of the enhanced power graphs associated to some nilpotent groups is obtained. Finally, we obtain a lower bound and an upper bound for the Wiener index of $\c$, where $G$ is a nilpotent group. The finite nilpotent groups attaining these bounds are also characterized.

\end{abstract}

\subjclass[2020]{05C25}

\keywords{Enhanced power graph, nilpotent group, vertex connectivity, minimum degree, Wiener index \\ *  Corresponding author}

\maketitle

\section{Introduction}
The study of graphs related to various algebraic structures becomes important, because graphs of this type have valuable applications and are related to automata theory (see \cite{a.kelarev2009cayley,kelarev2004labelled} and the books \cite{kelarev2003graph,kelarev2002ring}). Certain graphs, viz. power graphs, commuting graphs, Cayley graphs etc., associated to groups have been studied by various researchers, see \cite{a.chakrabarty2009undirected,a.kelarev2002undirected,a.segev2001commuting}.  Segev \cite{a.segev1999finite,a.segev2001commuting}, Segev and Seitz \cite{a.segev2002anisotropic} used combinatorial parameters of certain commuting graphs to establish long standing conjectures in the theory of division algebras. A variant of commuting graphs on groups has played an important role in classification of finite simple groups, see \cite{b.aschbacher2000finite}. Hayat \emph{et al.} \cite{a.hayat2019novel} used commuting graphs associated with groups to establish some NSSD (non-singular with a singular deck) molecular graph.

In order to measure how much the power graph is close to the commuting graph of a group $G$, Aalipour \emph{et al.} \cite{a.Cameron2016} introduced a new graph called \emph{enhanced power graph.} The enhanced power graph of a group $G$ is the simple undirected graph whose vertex set is $G$ and two distinct vertices $x, y$ are adjacent if $x, y \in \langle z \rangle$ for some $z \in G$. Indeed, the enhanced power graph contains the power graph and is a spanning subgraph of the commuting graph. Aalipour \emph{et al.} \cite{a.Cameron2016} characterized the finite group $G$, for which equality holds for either two of the three graphs viz. power graph, enhanced power graph and commuting graph of $G$. Further, the enhanced power graphs have received the considerable attention by various researchers. Bera \emph{et al.} \cite{a.Bera2017} characterized the abelian groups and the non abelian $p$-groups having dominatable enhanced power graphs. Dupont \emph{et al.} \cite{a.dupont2017rainbow} determined the rainbow connection number of enhanced power graph of a finite group $G$. Later, Dupont \emph{et al.} \cite{a.dupont2017enhanced} studied the graph theoretic properties of enhanced quotient graph of a finite group $G$.  A complete description of finite groups with enhanced power graphs admitting a perfect code have been studied in \cite{a.ma2017perfect}. Ma \emph{et al.} \cite{a.ma2020metric} investigated the metric dimension of an enhanced power graph of finite groups. Hamzeh et al. \cite{a.hamzeh2017automorphism} derived the automorphism groups of enhanced power graphs of finite groups.  Zahirovi$\acute{c}$ \emph{et al.} \cite{a.zahirovic2020study} proved that two finite abelian groups are isomorphic if their enhanced power graphs are isomorphic. Also, they supplied a characterization of finite nilpotent groups whose enhanced power graphs are perfect.  Recently, Panda \emph{et al.} \cite{a.panda2021enhanced} studied the graph-theoretic properties, viz. minimum degree, independence number, matching number, strong metric dimension and perfectness, of enhanced power graphs over finite abelian groups. Moreover, the enhanced power graphs associated to non-abelian groups such as semidihedral, dihedral, dicyclic, $U_{6n}$, $V_{8n}$ etc., have been studies in \cite{a.dalal2021enhanced, a.panda2021enhanced}. Bera \emph{et al.} \cite{a.bera2021connectivity} gave an upper bound for the vertex connectivity of enhanced power graph of any finite abelian group. Moreover, they classified the finite abelian groups whose proper enhanced power graphs are connected. The connectivity of the complement of the enhanced power graph has been studied in \cite{a.ma2021note}.

In this paper, we aim to enhance the investigation of the interplay between algebraic properties of the group $G$ and its enhanced power graph $\c$. This paper is arranged as follows. In Section 2, we provide the necessary background material and fix our notations used throughout the paper. In section $3$, we classify all finite groups such that the minimum degree is equal to the vertex connectivity of $\c$. Section $4$ comprises the classification of groups whose enhanced power graphs are (strongly) regular. In Section $5$, we obtain the vertex connectivity  of $\c$, where $G$ belongs to a class of nilpotent groups. Finally, in Section $6$, we study the Wiener index of $\c$, where $G$ is a nilpotent group.

\section{Preliminaries}
 In this section, first we recall the graph theoretic notions from  \cite{b.westgraph}. A \emph{graph} $\Gamma$ is a pair  $\Gamma = (V, E)$, where $V(\Gamma)$ and $E(\Gamma)$ are the set of vertices and edges of $\Gamma$, respectively. Two distinct vertices $u_1, u_2$ are $\mathit{adjacent}$, denoted by $u_1 \sim u_2$ if there is an edge between $u_1$ and $u_2$. Otherwise, we write it as $u_1 \nsim u_2$. Let $\Gamma$ be a graph. A \emph{subgraph}  $\Gamma'$ of $\Gamma$ is the graph such that $V(\Gamma') \subseteq V(\Gamma)$ and $E(\Gamma') \subseteq E(\Gamma)$. For $X \subseteq V(\Gamma)$ the subgraph of $\Gamma$ induced by $X$ is the graph with vertex set $X$ and two vertices of $\Gamma(X)$ are adjacent if and only if they are adjacent in $\Gamma$. A graph $\Gamma$ is said to be $complete$ if every two distinct vertices are adjacent. All the vertices which are adjacent to a vertex $v \in V(\Gamma)$ is called the \emph{neighbours} of $v$.  The \emph{degree} $deg(v)$ of a vertex $v$ in a graph $\Gamma$, is the number of edges incident to $v$.  The \emph{minimum degree}, denoted by $\delta (\Gamma)$, is defined by $\delta(\Gamma)= {\rm min} \{deg(v):v\in V(\Gamma)\}$. A graph $\Gamma$ is \emph{k-regular} if the degree of every vertex in $V(\Gamma)$ is $k$. A graph $\Gamma$ is said to be \emph{strongly regular graph} with parameters $(n,k, \lambda, \mu )$ if it is  $k$-regular graph on $n$ vertices such that each pair of adjacent vertices has exactly $\lambda $ common neighbours, and each pair of non-adjacent vertices has exactly $\mu$ common neighbours. A \emph{path} in a graph is the sequence of distinct vertices with the property that each vertex in the sequence is adjacent to the next vertex of it. A graph $\Gamma$ is \emph{connected}  if each pair of vertices has a path in $\Gamma$. Otherwise, $\Gamma$ is \emph{disconnected}. The \emph{distance} between $u, v \in V(\Gamma)$, denoted by $d(u, v)$,  is the number of edges in a shortest path connecting them. For a connected graph $\Gamma$, the Wiener index $W(\G)$ is defined by 
$$W(\Gamma)=\sum\limits_{x \in V(\Gamma)} \sum\limits_{y \in V(\Gamma)}\frac{d(x,y)}{2}. $$

 The \emph{diameter} of $\Gamma$ is the maximum distance between the pair of vertices in $\Gamma$. A \emph{vertex (or edge) cut-set} in a connected graph $\Gamma$ is a set $X$ of vertices (or edges) such that the remaining subgraph $\Gamma \setminus X$, by removing the set $X$, is either disconnected or has only one vertex. The cardinality of a smallest vertex (or edge) cut-set of $\Gamma$ is called the \emph{vertex (or edge) connectivity} of $\Gamma$ and it is denoted by $\kappa(\Gamma)$ (or $\kappa'(\Gamma))$. For a connected graph $\Gamma$, it is well known that $\kappa(\Gamma) \le \kappa'(\Gamma) \le \delta(\Gamma)$. The \emph{strong product} $\Gamma _1\boxtimes \Gamma _2\boxtimes \cdot \cdot \cdot \boxtimes \Gamma _r$ of graphs $\Gamma _1,\Gamma _2,...,\Gamma _r$ is a graph such that \begin{itemize}
    \item the vertex set of $\Gamma _1\boxtimes \Gamma _2\boxtimes \cdot \cdot \cdot \boxtimes \Gamma _r$ is the Cartesian product $V(\Gamma _1) \times V(\Gamma _2)\times \cdot \cdot \cdot \times V(\Gamma _r)$; and
    \item distinct vertices $(u_1,u_2,...,u_r )$ and $(v_1,v_2,...,v_r )$ are adjacent in $\Gamma _1\boxtimes \Gamma _2\boxtimes \cdot \cdot \cdot \boxtimes \Gamma _r$ if and only if either $u_i = v_i$ or $u_i\sim v_i$ in $\Gamma _i$ for each $i\in [r]$. 
\end{itemize}

We refer the readers to \cite{b.dummit1991abstract} for basic definitions and results of group theory. A cyclic subgroup of a group $G$ is called a \emph{maximal cyclic subgroup} if it is not properly contained in any cyclic subgroup of $G$. If $G$ is a cyclic group, then $G$ is  the only maximal cyclic subgroup of $G$ is itself. The set of all maximal cyclic subgroups of $G$ is denoted by $\m$. The following result is useful for latter use.


 \begin{theorem}{\rm \cite{b.dummit1991abstract}}{\label{nilpotent}}
 Let $G$ be a finite group. Then the following statements are equivalent:
 \begin{enumerate}
     \item[(i)] $G$ is a nilpotent group.
     \item[(ii)] Every Sylow subgroup of $G$ is normal.
    \item[(iii)] $G$ is the direct product of its Sylow subgroups.
    \item[(iv)] For $x,y\in G, \  x$ and $y$ commute whenever $o(x)$ and $o(y)$ are relatively primes.
 \end{enumerate}
 \end{theorem}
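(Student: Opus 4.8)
The plan is to prove the four conditions equivalent through the cycle $(i) \Rightarrow (ii) \Rightarrow (iii) \Rightarrow (iv) \Rightarrow (i)$, taking as the working definition of nilpotency that the upper central series of $G$ terminates at $G$ (equivalently, the lower central series reaches the identity). The heart of the argument, and the step I expect to be the main obstacle, is $(i) \Rightarrow (ii)$. Here the key tool is the \emph{normalizer condition}: in a nilpotent group every proper subgroup $H$ satisfies $N_G(H) \supsetneq H$. I would prove this by induction on the nilpotency class, using that $Z(G)$ is nontrivial and pushing $H$ upward through the terms of the upper central series (if $Z(G) \not\subseteq H$ one already gets strict growth, and otherwise one passes to $G/Z(G)$, which has smaller class). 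Granting the normalizer condition, let $P$ be a Sylow $p$-subgroup; a Frattini-type argument shows $N_G(P)$ is self-normalizing, that is $N_G(N_G(P)) = N_G(P)$. If $P$ were not normal then $N_G(P)$ would be a proper, self-normalizing subgroup, contradicting the normalizer condition, so $P \trianglelefteq G$.

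For $(ii) \Rightarrow (iii)$, normality forces a unique Sylow $p$-subgroup $P_p$ for each prime $p \mid |G|$. Since subgroups of coprime order intersect trivially and each $P_p$ is normal, the product $P_{p_1} P_{p_2} \cdots P_{p_k}$ is a subgroup of order $\prod_i |P_{p_i}| = |G|$, and coprimality of the factor orders yields the internal direct product $G = P_{p_1} \times \cdots \times P_{p_k}$. For $(iii) \Rightarrow (iv)$, elements lying in distinct direct factors commute; if $o(x)$ and $o(y)$ are coprime, then decomposing $x$ and $y$ along the factors places their nontrivial components in disjoint sets of factors, whence $x$ and $y$ commute.

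Finally, for $(iv) \Rightarrow (i)$, note that for distinct primes $p, q$ every element of a Sylow $p$-subgroup $P$ has order coprime to every element of a Sylow $q$-subgroup $Q$, so $(iv)$ forces all the Sylow subgroups to centralize one another pairwise. Their product $P_{p_1} \cdots P_{p_k}$ is then a subgroup of order $|G|$ in which each factor is centralized (hence normalized) by all the others and by itself, so each factor is normal and $G$ is the direct product of its Sylow subgroups; since a finite $p$-group is nilpotent and nilpotency is preserved under finite direct products, $G$ is nilpotent, closing the cycle. As noted, the genuinely nontrivial input is the normalizer condition together with the self-normalizing property of $N_G(P)$ used in $(i) \Rightarrow (ii)$; the remaining implications reduce to bookkeeping with coprime orders once the direct-product structure is available.
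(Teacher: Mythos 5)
The paper offers no proof of this theorem: it is quoted verbatim as a standard result from Dummit--Foote, so there is nothing internal to compare your argument against. Your proof is correct and is essentially the canonical textbook argument --- the cycle $(i)\Rightarrow(ii)\Rightarrow(iii)\Rightarrow(iv)\Rightarrow(i)$ with the normalizer condition plus the self-normalizing property of $N_G(P)$ carrying the weight in $(i)\Rightarrow(ii)$, and coprime-order bookkeeping handling the rest --- so it matches the cited source's approach rather than introducing anything new.
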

All the groups considered in this paper are finite. We write  $p, p_1,p_2,...,p_r$ to be prime numbers such that $p_1 <p_2<\cdot \cdot \cdot <p_r$ and $P_i$ the unique Sylow $p_i$-subgroup of $G$ for $i\in [r]=\{1,2,...,r\}$. In view of  Theorem \ref{nilpotent}, for a nilpotent group $G$ and $x \in G$, there exists a unique element $x_i \in P_i$ such that $x=x_1x_2...x_r$, for $i\in [r]$. 
 The \emph{enhanced power graph} $\c$ of a finite group $G$ is a simple undirected graph with vertex set $G$ and two vertices are adjacent if they belong to the same cyclic subgroup of $G$. For  $X\subseteq G$, we denote by $\mathcal{P}_E(X)$ the subgraph induced by $X$.
The following results will be useful in the sequel. 
\begin{theorem}{\rm \cite[Theorem 2.4]{a.Bera2017}}{\label{complete}}
The enhanced power graph $\c$   of the group $G$  is complete if and only if $G$ is cyclic.
\end{theorem}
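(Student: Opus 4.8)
The statement is a biconditional, so the plan is to handle the two implications separately, with essentially all of the work lying in the forward direction.

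For the easy direction, I would suppose $G$ is cyclic, say $G = \langle z \rangle$. Then every element of $G$ lies in the single cyclic subgroup $\langle z \rangle$, so any two distinct vertices $x, y$ satisfy the adjacency criterion witnessed by this common $z$. Hence $\c$ is complete, and this direction needs no more than one line.

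For the converse, I would assume $\c$ is complete and aim to exhibit a single cyclic subgroup equal to all of $G$. The idea is to fix an element $g \in G$ of maximum order, say $o(g) = n$ (such an element exists since $G$ is finite). Let $h \in G$ be arbitrary. By completeness $g$ and $h$ are adjacent, so there is $z \in G$ with $g, h \in \langle z \rangle$. The key step is an order computation: since $g \in \langle z \rangle$, Lagrange's theorem gives $n = o(g) \mid o(z)$, while maximality of $n$ forces $o(z) \le n$, and together these yield $o(z) = n$. Consequently $\langle g \rangle \subseteq \langle z \rangle$ with both subgroups of the same finite order $n$, so $\langle g \rangle = \langle z \rangle$ and thus $h \in \langle g \rangle$. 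Since $h$ was arbitrary, $G = \langle g \rangle$ is cyclic.

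The argument is essentially forced once the maximum-order element is fixed; the only point requiring care is justifying $o(z) = n$ and concluding $\langle g \rangle = \langle z \rangle$, which I expect to be the main (and only mild) obstacle. No appeal to the set of maximal cyclic subgroups $\m$ is needed, although one could alternatively phrase the converse by observing that completeness forces $G$ to possess a unique maximal cyclic subgroup, which must then coincide with $G$.
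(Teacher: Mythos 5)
Your proof is correct. The paper states this result as a citation of \cite[Theorem 2.4]{a.Bera2017} and gives no proof of its own, but your argument is the standard one: the forward direction is immediate, and for the converse the maximum-order element $g$ together with the divisibility/maximality squeeze $o(g)\mid o(z)\le o(g)$ correctly forces $\langle g\rangle=\langle z\rangle$ and hence $G=\langle g\rangle$. Your closing remark is also apt: completeness forces every element to lie in a common maximal cyclic subgroup, which is the viewpoint (via $\mathcal{M}(G)$) that the rest of the paper exploits.
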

\begin{theorem}{\rm \cite[Theorem 3.2]{a.panda2021enhanced}} \label{minimum-degree}
For a finite group $G$, the minimum degree $\delta( \c )=m-1$, where $m$ is the order of the maximal cyclic subgroup of minimum possible order.
\end{theorem}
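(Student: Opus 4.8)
The plan is to identify, for each vertex, its closed neighbourhood with a union of maximal cyclic subgroups, and then to locate precisely where the minimum degree is attained. First I would record the basic translation between adjacency and the subgroup structure: two vertices $x,y$ are adjacent in $\c$ exactly when some cyclic subgroup contains both, and since every cyclic subgroup is contained in a maximal one, the closed neighbourhood $N[x]=\{x\}\cup N(x)$ equals the union of all members of $\m$ that contain $x$. In particular, if $x$ happens to lie in a single maximal cyclic subgroup $M$, then $N[x]=M$ and $\deg(x)=|M|-1$.

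For the lower bound $\delta(\c)\ge m-1$, I would take an arbitrary vertex $x$ and note that $\langle x\rangle$ is contained in at least one maximal cyclic subgroup $M_x$, using that $G$ is finite. Every element of $M_x$ shares the cyclic subgroup $M_x$ with $x$, so $M_x\subseteq N[x]$, whence $\deg(x)\ge |M_x|-1\ge m-1$; the last inequality holds because $m$ is, by definition, the least order of a maximal cyclic subgroup of $G$.

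For the matching upper bound I would exhibit a single vertex of degree exactly $m-1$. Let $M\in\m$ with $|M|=m$ and let $g$ be a generator of $M$. The key step is that $g$ lies in exactly one maximal cyclic subgroup: any cyclic subgroup $C$ containing $g$ satisfies $M=\langle g\rangle\subseteq C$, and maximality of $M$ forces $C=M$. Hence $N[g]=M$, so $\deg(g)=m-1$ and therefore $\delta(\c)\le m-1$. Combining the two bounds yields $\delta(\c)=m-1$.

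The argument is short, and the only genuinely delicate point is the uniqueness step for a generator $g$ of a minimal maximal cyclic subgroup; everything else is bookkeeping about closed neighbourhoods. I do not anticipate a serious obstacle, since the finiteness of $G$ simultaneously guarantees the existence of a maximal cyclic subgroup containing any given element and the order comparisons for cyclic subgroups used in both bounds.
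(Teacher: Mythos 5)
Your argument is correct and complete: identifying the closed neighbourhood $N[x]$ with the union of the maximal cyclic subgroups containing $x$ gives the lower bound $\deg(x)\ge m-1$, and the observation that a generator of a minimum-order maximal cyclic subgroup lies in exactly one maximal cyclic subgroup gives the matching upper bound, which is exactly the standard proof. Note that this paper states the theorem only as a citation of Theorem 3.2 of Panda, Dalal and Kumar and supplies no proof of its own, so there is no in-paper argument to compare against; your proof coincides with the one in that reference.
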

\begin{lemma}{\rm \cite[Lemma 2.11]{a.chattopadhyay2021minimal}}{\label{on connectivity}}
Any maximal cyclic subgroup of a finite nilpotent group $G=P_1P_2\cdots P_r$ is of the form $M_1M_2\cdots M_r $, where $M_i$ is a maximal cyclic subgroup of $P_i,  \ (1 \leq i \leq r)$.
\end{lemma}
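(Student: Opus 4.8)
The plan is to exploit the internal direct product decomposition $G = P_1 \times P_2 \times \cdots \times P_r$ guaranteed by Theorem \ref{nilpotent}, and thereby reduce the structure of a cyclic subgroup of $G$ to that of cyclic subgroups in the individual Sylow factors. Let $M = \langle x \rangle$ be a maximal cyclic subgroup of $G$, and write $x = x_1 x_2 \cdots x_r$ with $x_i \in P_i$ the unique such decomposition. Since each $o(x_i)$ is a power of the distinct prime $p_i$, the elements $x_1, \ldots, x_r$ pairwise commute and have pairwise coprime orders.

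First I would record the elementary fact that if $a$ and $b$ are commuting elements of coprime order, then $\langle ab \rangle = \langle a \rangle \langle b \rangle$: indeed $\langle a \rangle \cap \langle b \rangle = \{e\}$ forces $|\langle a \rangle \langle b \rangle| = o(a)\,o(b) = o(ab)$, while the Chinese Remainder Theorem recovers $a$ and $b$ as powers of $ab$, giving $\langle a \rangle, \langle b \rangle \subseteq \langle ab \rangle \subseteq \langle a \rangle \langle b \rangle$ and hence equality by counting. Iterating this over $i \in [r]$ yields $M = \langle x \rangle = M_1 M_2 \cdots M_r$, where $M_i := \langle x_i \rangle$ is a cyclic subgroup of $P_i$. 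This already produces the claimed factorization, so it only remains to upgrade each $M_i$ to a \emph{maximal} cyclic subgroup of $P_i$.

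For the maximality I would argue by contradiction: suppose some $M_j$ is not maximal cyclic in $P_j$, so that $M_j \subsetneq N_j$ for some cyclic $N_j \le P_j$. Then the product $M' = M_1 \cdots M_{j-1}\, N_j\, M_{j+1} \cdots M_r$ is again a product of pairwise commuting cyclic groups of pairwise coprime orders, hence cyclic by the same coprimality fact, and since $M_j \subsetneq N_j$ while the other factors are unchanged we get $M \subsetneq M'$ by comparing orders. This contradicts the maximality of $M$ in $G$, so every $M_i$ must be maximal cyclic in $P_i$ (in particular no factor is trivial when $P_i \neq \{e\}$), completing the argument.

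The single point requiring care, and the main obstacle, is the coprimality lemma together with its two uses: one must confirm both that a product of pairwise commuting cyclic subgroups of pairwise coprime orders is itself cyclic, and that such a product strictly enlarges when exactly one factor is strictly enlarged. Once these are in hand, everything else is routine bookkeeping with the direct product decomposition supplied by nilpotency.
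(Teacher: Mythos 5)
Your proof is correct and complete. Note, however, that the paper does not prove this lemma at all: it is imported verbatim from \cite[Lemma 2.11]{a.chattopadhyay2021minimal}, so there is no in-paper argument to compare yours against. Your argument is the standard one and is self-contained: the identity $\langle x\rangle=\langle x_1\rangle\langle x_2\rangle\cdots\langle x_r\rangle$ for $x=x_1x_2\cdots x_r$ with the $x_i$ in distinct Sylow factors is exactly what the paper later re-derives for its own purposes as Lemmas \ref{first} and \ref{generation} (via the same coprimality/CRT mechanism you use), and your maximality step --- enlarging a single non-maximal factor $M_j\subsetneq N_j$ to produce a strictly larger cyclic subgroup $M_1\cdots N_j\cdots M_r\supsetneq M$, contradicting maximality of $M$ in $G$ --- is the natural way to finish; it also correctly disposes of the degenerate case $x_j=e$ with $P_j$ nontrivial. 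One cosmetic remark: once the Chinese Remainder Theorem gives $\langle a\rangle,\langle b\rangle\subseteq\langle ab\rangle$, the reverse inclusion $\langle ab\rangle\subseteq\langle a\rangle\langle b\rangle$ is immediate, so the order count is redundant (either argument alone suffices), but this is harmless.
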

\begin{corollary}{\label{contain}}
Let $G= P_1 P_2 \cdots P_r$ be a nilpotent group and $P_i$ is cyclic for some $i$. Then $P_i$ is contained in every maximal cyclic subgroup of $G.$
\end{corollary}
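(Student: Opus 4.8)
The plan is to combine Lemma~\ref{on connectivity} with the observation that a cyclic group has exactly one maximal cyclic subgroup, namely itself. First I would take an arbitrary maximal cyclic subgroup $M$ of $G = P_1 P_2 \cdots P_r$ and invoke Lemma~\ref{on connectivity} to write $M = M_1 M_2 \cdots M_r$, where each $M_j$ is a maximal cyclic subgroup of the Sylow subgroup $P_j$ for $1 \le j \le r$. This decomposition is the crucial structural fact, and the rest of the argument is essentially bookkeeping on the $i$-th factor.

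Next I would focus on the hypothesis that $P_i$ is cyclic for the fixed index $i$. Since $P_i$ is itself a cyclic group, the only maximal cyclic subgroup of $P_i$ is $P_i$ (this is precisely the remark in the excerpt that a cyclic group is its own unique maximal cyclic subgroup). Consequently, in the factorization $M = M_1 \cdots M_r$ the $i$-th factor is forced to equal $P_i$, so that $P_i = M_i \subseteq M$. Because $M$ was an arbitrary maximal cyclic subgroup of $G$, this shows $P_i$ is contained in every maximal cyclic subgroup of $G$, which is exactly the claim.

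I do not anticipate a serious obstacle here, since the corollary is an almost immediate consequence of the preceding lemma once the uniqueness of the maximal cyclic subgroup of a cyclic group is noted. The only point requiring a little care is to verify that the hypothesis of Lemma~\ref{on connectivity} genuinely delivers $M_i$ as a \emph{maximal} cyclic subgroup of $P_i$ (not merely some cyclic subgroup), so that the uniqueness argument applies cleanly; this is guaranteed by the statement of the lemma. Thus the proof reduces to writing $M = M_1 \cdots M_r$, identifying $M_i = P_i$ by cyclicity of $P_i$, and concluding $P_i \subseteq M$ for every such $M$.
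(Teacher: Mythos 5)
Your proposal is correct and is exactly the argument the paper intends: the corollary follows immediately from Lemma~\ref{on connectivity} together with the observation that a cyclic group is its own unique maximal cyclic subgroup, so the $i$-th factor in $M = M_1\cdots M_r$ must be $P_i$. The paper gives no separate proof, treating this as an immediate consequence of the lemma, which matches your reasoning.
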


\begin{theorem}{\rm \cite[Theorem 4.2]{a.moghaddamfar2014certain}}{\label{strongly regular}}
Let $G$ be a non-trivial finite group. Then the proper power graph of $G$ is strongly regular graph if and only if G is a $p$-group of order $p^m$ for which $exp(G) = p$ or $p^m$.
\end{theorem}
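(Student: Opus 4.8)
\emph{Proof sketch.} Throughout, write $\mathcal{P}(G)$ for the power graph of $G$ and $\mathcal{P}^{*}(G)$ for its proper power graph, i.e. the subgraph induced on the non-identity elements, and recall that in $\mathcal{P}(G)$ two vertices are adjacent precisely when one lies in the cyclic subgroup generated by the other. For the sufficiency I would treat the two families directly. If $\exp(G)=p$, then every non-identity element has order $p$, so two distinct vertices of $\mathcal{P}^{*}(G)$ are adjacent if and only if they generate the same subgroup of order $p$; hence $\mathcal{P}^{*}(G)$ is the disjoint union of the $(p^{m}-1)/(p-1)$ cliques $K_{p-1}$ coming from the subgroups of order $p$, which is strongly regular with parameters $(p^{m}-1,\,p-2,\,p-3,\,0)$ (the empty graph when $p=2$ being the degenerate case). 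If instead $\exp(G)=p^{m}$, then $G$ is cyclic of order $p^{m}$; since the subgroups of a cyclic $p$-group form a chain, any two elements are comparable, so $\mathcal{P}(G)$ is complete and $\mathcal{P}^{*}(G)=K_{p^{m}-1}$ is (degenerately) strongly regular. This disposes of the ``if'' direction.

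For the converse, assume $\mathcal{P}^{*}(G)$ is strongly regular with parameters $(n,k,\lambda,\mu)$. The first step is to exploit regularity. A generator $x$ of a maximal cyclic subgroup $M$ can only be adjacent to the other elements of $M$, so $\deg(x)=|M|-2$; regularity then forces every maximal cyclic subgroup to have the same order $m'$, and $k=m'-2$. The plan for the second step is to force $m'$ to be a prime power by comparing $\lambda$ on two kinds of adjacent pairs inside a single maximal cyclic subgroup $M=\langle g\rangle\cong\mathbb{Z}_{m'}$. Two generators $g,g'$ of $M$ have common neighbourhood $M\setminus\{e,g,g'\}$, giving $\lambda=m'-3$. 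On the other hand, if $m'=p^{b}s$ with $p$ a prime, $p\nmid s$ and $s>1$, and $u=g^{m'/p}$ has order $p$, then a short count shows that the common neighbours of the adjacent pair $u,g$ are exactly the elements of $M$ whose order is divisible by $p$, minus $u$ and $g$, so there are $m'-s-2$ of them. Equating the two values of $\lambda$ yields $s=1$, so $m'=p^{a}$ is a prime power; since every element lies in a maximal cyclic subgroup of order $p^{a}$, the group $G$ is a $p$-group.

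It remains to show that a $p$-group with every maximal cyclic subgroup of order $p^{a}$ is either cyclic (so $\exp(G)=p^{m}$) or has $a=1$ (so $\exp(G)=p$); equivalently, I must rule out the case $a\ge 2$ with at least two maximal cyclic subgroups. Here the graph is non-complete, since generators of distinct maximal cyclic subgroups are non-adjacent, so $\mu$ is genuinely defined. The dichotomy I would use is: either some maximal cyclic subgroups $M_1,M_2$ fail to share their order-$p$ subgroup, in which case choosing $u\in M_1$ of order $p$ with $u\notin M_2$ and $g_2$ a generator of $M_2$ gives a non-adjacent pair with no common neighbour, forcing $\mu=0$ and hence $\mathcal{P}^{*}(G)$ to be a disjoint union of cliques (so the $M_i$ partition $G$); or every pair of maximal cyclic subgroups shares a common order-$p$ subgroup $\Omega$, in which case each non-identity element of $\Omega$ lies in every maximal cyclic subgroup and is therefore a dominating vertex, contradicting non-completeness of a regular graph.

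The hard part, and the main obstacle, is eliminating the remaining partition case: $G$ a $p$-group, $a\ge 2$, partitioned by cyclic subgroups $M_1,\dots,M_t$ of order $p^{a}$ that pairwise intersect trivially. I would argue by a central element: pick $z\in Z(G)$ of order $p$, lying in a unique part $M_i$, and pick any other part $M_j=\langle x_j\rangle$. Since $a\ge 2$ and $z$ is central of order $p$, one computes $(x_j z)^{p^{a-1}}=x_j^{p^{a-1}}\ne e$, so $x_j z$ has order $p^{a}$ and generates a part $M_l$ whose order-$p$ subgroup is $\langle x_j^{p^{a-1}}\rangle$, the order-$p$ subgroup of $M_j$. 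As $z\notin M_j$ forces $M_l\ne M_j$, the parts $M_l$ and $M_j$ now share a non-trivial subgroup, contradicting the partition. This contradiction rules out $a\ge 2$ with two or more parts, leaving exactly $\exp(G)=p$ or $\exp(G)=p^{m}$, and completing the classification.
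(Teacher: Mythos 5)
The paper offers no proof of this statement: it is quoted verbatim from \cite[Theorem 4.2]{a.moghaddamfar2014certain} and used as a black box, so there is no internal argument to compare yours against. Judged on its own, your proof is correct and self-contained. The sufficiency computations are routine (disjoint union of equal cliques when $\exp(G)=p$, complete graph when $G$ is cyclic of order $p^m$; both are strongly regular under the paper's definition, which does not exclude the degenerate cases). In the converse, the chain of steps is sound: regularity forces all maximal cyclic subgroups to share an order $m'$ with $k=m'-2$; the $\lambda$-comparison between a pair of generators of $M$ (giving $m'-3$) and the pair $\bigl(g^{m'/p},g\bigr)$ (giving $m'-s-2$) forces $m'=p^a$, so $G$ is a $p$-group --- the only point worth a sentence is that two distinct generators exist only when $m'\ge 3$, which is automatic whenever $m'$ is not a prime power; and the dichotomy between $\mu=0$ (hence a union of cliques, hence the maximal cyclic subgroups partition $G\setminus\{e\}$) and a common order-$p$ subgroup (hence a dominating vertex in a regular non-complete graph, impossible) is airtight. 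Your final step, eliminating a partition into cyclic parts of order $p^a$ with $a\ge 2$ by taking a central $z$ of order $p$ outside a part $M_j=\langle x_j\rangle$ and observing that $x_jz$ has order $p^a$ and generates a part sharing the unique order-$p$ subgroup of $M_j$, is a pleasant elementary substitute for what one would otherwise import from the theory of equally partitioned groups (Isaacs' theorem that a group partitioned by subgroups of equal order is a $p$-group of exponent $p$); you prove exactly the special case needed and keep the whole argument first-principles.
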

\section{Equality of the Minimum Degree and the Vertex Connectivity of $\c$}
It is well known that the diameter of $\c$ is at most two. Consequently, $\kappa'(\c ) = \delta(\c )$ and so $\kappa(\c ) \le \delta(\c )$. In this section, we classify the group $G$ such that $\delta(\c )=\kappa(\c )$. We begin with the following lemma.
\begin{lemma}{\label{cut-set}}
Let \g be a non-cyclic group and $M\in \m$. Then $\overline{M}$ is a cut-set of $\c$, where $\overline{M}$ is the union of all sets of the form $M\cap \langle x \rangle$, for $x\in G\setminus M$.
\end{lemma}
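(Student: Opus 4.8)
The plan is to exhibit two nonempty vertex sets that land in different connected components once $\overline{M}$ is deleted. Write $M=\langle g\rangle$. I would first observe that $\overline{M}\subseteq M$, since each constituent set $M\cap\langle x\rangle$ is contained in $M$. Consequently, deleting $\overline{M}$ removes only vertices of $M$, so the whole set $G\setminus M$ survives; because $G$ is non-cyclic we have $M\neq G$, and hence $G\setminus M\neq\emptyset$.

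Next I would check that $M\setminus\overline{M}$ is nonempty by showing it contains every generator of $M$. Indeed, suppose a generator $g$ of $M$ lay in $\langle x\rangle$ for some $x\in G\setminus M$. Then $M=\langle g\rangle\subseteq\langle x\rangle$, and since $M$ is a \emph{maximal} cyclic subgroup while $\langle x\rangle$ is cyclic, this forces $M=\langle x\rangle$ and thus $x\in M$, a contradiction. Therefore $g\notin\overline{M}$, so $g\in M\setminus\overline{M}$ and this set is nonempty. This is precisely where the maximality of $M$ is used, and I expect it to be the only genuinely delicate point in the argument.

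The key step is to show that in $\c$ no vertex of $M\setminus\overline{M}$ is adjacent to any vertex of $G\setminus M$. Assume for contradiction that $m\in M\setminus\overline{M}$ and $y\in G\setminus M$ are adjacent, so that $m,y\in\langle z\rangle$ for some $z\in G$. If $z\in M$, then $\langle z\rangle\subseteq M$ because $M$ is cyclic, which would force $y\in M$, contrary to $y\notin M$; hence $z\notin M$. But then $m\in M\cap\langle z\rangle\subseteq\overline{M}$, contradicting $m\notin\overline{M}$. Thus every neighbour of a vertex in $M\setminus\overline{M}$ lies in $M$, and after deleting $\overline{M}$ such neighbours necessarily lie in $M\setminus\overline{M}$.

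Putting these together, in the reduced graph obtained from $\c$ by removing $\overline{M}$ there is no edge joining the nonempty set $M\setminus\overline{M}$ to the nonempty set $G\setminus M$. Hence the reduced graph is disconnected, and $\overline{M}$ is a cut-set of $\c$, as claimed. Everything beyond the generator-survival step reduces to the short adjacency argument above, so I do not anticipate any further obstacles.
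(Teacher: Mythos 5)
Your proof is correct and rests on essentially the same two observations as the paper's: the generators of $M$ avoid $\overline{M}$ by maximality of $M$, and any element of $M$ that is $\mathcal{P}_E$-adjacent to an element outside $M$ must lie in $\overline{M}$. Your presentation — exhibiting the edge-free partition of $G\setminus\overline{M}$ into the nonempty sets $M\setminus\overline{M}$ and $G\setminus M$ — is a tidier packaging than the paper's path-chasing argument, but it is the same underlying idea.
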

\begin{proof}
Let $M=\langle a\rangle $ and $M'=\langle b \rangle$ be two maximal cyclic subgroups of \g. Then we claim that there is no path between $a$ and $b$ in $\mathcal{P}_E (G\setminus \overline{M})$. If possible, let there exists a path $a\sim x_1 \sim x_2 \sim \cdots \sim x_k \sim b$
from $a$ to $b$ in $\mathcal{P}_E (G\setminus \overline{M})$. Then $x_1\in M$. Otherwise, $\langle a,x_1\rangle$ is a cyclic subgroup of $G$ does not contained in $M$, which is not possible. We may now suppose that $x_1,x_2, \ldots, x_{r-1}\in M$ and $x_r\notin M$ for some $r\in [k]\setminus \{1\}$. Note that such $r$ exists because $x_k \sim b$ and if $x_r\in M$ for each $r\in [k]$, then $x_k\in \overline{M}$ which is not possible. Now if $x_{r-1}\in M$ then by using similar argument, we obtain $x_{r-1}\in \overline{M}.$ It follows that no such path exists and so $\overline{M}$ is a cut-set.
\end{proof}
\begin{theorem}
For the group $G$, $\delta(\c )=\kappa(\c )$ if and only if one of the following holds:
\begin{enumerate}
    \item[(i)] $G$ is a cyclic group.
    \item[(ii)] $G$ is non-cyclic and it contains a maximal cyclic subgroup of order $2.$
\end{enumerate}
\end{theorem}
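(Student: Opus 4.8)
The plan is to prove both directions of the equivalence, using the key facts already established: $\delta(\c)=m-1$ where $m$ is the order of a maximal cyclic subgroup of minimum order (Theorem \ref{minimum-degree}), and $\kappa(\c)\le\delta(\c)$ always holds. For direction (i), if $G$ is cyclic then by Theorem \ref{complete} the graph $\c$ is complete on $|G|$ vertices, so $\kappa(\c)=|G|-1=\delta(\c)$ trivially. For the ``if'' part of (ii), suppose $G$ is non-cyclic and contains a maximal cyclic subgroup $M$ of order $2$, say $M=\langle a\rangle$. Then Theorem \ref{minimum-degree} gives $\delta(\c)=m-1=1$ (since $m=2$ is minimal possible for a non-cyclic group, as $m=1$ is impossible). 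Since $\kappa(\c)\le\delta(\c)=1$ and $\c$ is connected with more than one vertex, we get $\kappa(\c)\ge 1$, forcing $\kappa(\c)=1=\delta(\c)$.

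The substantive direction is the ``only if'' part: assuming $G$ is non-cyclic and $\delta(\c)=\kappa(\c)$, I must show $G$ has a maximal cyclic subgroup of order $2$. I would argue by contrapositive. Suppose $G$ is non-cyclic and every maximal cyclic subgroup has order at least $3$; the goal is to produce a cut-set strictly smaller than $\delta(\c)=m-1$, thereby showing $\kappa(\c)<\delta(\c)$. The natural candidate cut-set is the set $\overline{M}$ from Lemma \ref{cut-set}, where $M=\langle a\rangle$ is chosen to be a maximal cyclic subgroup realizing the minimum order $m$. By that lemma, $\overline{M}$ is a cut-set, so it suffices to show $|\overline{M}|<m-1$. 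By definition $\overline{M}=\bigcup_{x\in G\setminus M}(M\cap\langle x\rangle)$, and each $M\cap\langle x\rangle$ is a proper subgroup of $M$ (proper because $M$ is maximal cyclic, so no $x\notin M$ can generate a cyclic group containing all of $M$).

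The heart of the argument, and the step I expect to be the main obstacle, is bounding $|\overline{M}|$. Since $M=\langle a\rangle$ is cyclic of order $m$, its proper subgroups are exactly the $\langle a^{m/d}\rangle$ for proper divisors $d$ of $m$; the union $\overline{M}$ is a union of some of these proper subgroups, all of which contain the identity $e$. A key observation is that the identity $e$ lies in $\overline{M}$ but contributes a heavily overcounted element, and more importantly every proper subgroup of $M$ omits at least one generator of $M$. I would argue that $\overline{M}$ misses all $\phi(m)$ generators of $M$ (each generator of $M$ lies only in $M$ itself among cyclic subgroups, so belongs to no proper $M\cap\langle x\rangle$), giving $|\overline{M}|\le m-\phi(m)$. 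Then I need $m-\phi(m)<m-1$, i.e. $\phi(m)>1$, which holds precisely when $m\ge 3$ — exactly our standing assumption. This yields $\kappa(\c)\le|\overline{M}|\le m-\phi(m)<m-1=\delta(\c)$, completing the contrapositive. The delicate point to verify carefully is that the generators of $M$ indeed lie outside $\overline{M}$: if $g$ is a generator of $M$ and $g\in M\cap\langle x\rangle$ for some $x\notin M$, then $\langle g\rangle=M\subseteq\langle x\rangle$, contradicting maximality of $M$; so the claim holds and the bound $|\overline{M}|\le m-\phi(m)$ is secure.
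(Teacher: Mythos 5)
Your proposal is correct and follows essentially the same route as the paper: both directions use Theorem \ref{complete} and Theorem \ref{minimum-degree}, and the substantive direction rests on Lemma \ref{cut-set} together with the observation that the generators of a minimum-order maximal cyclic subgroup $M$ lie outside $\overline{M}$, giving $|\overline{M}|\le m-\phi(m)<m-1$ when $m\ge 3$. The only cosmetic differences are that you make the Euler-phi count explicit and justify $\kappa\ge 1$ via connectedness rather than by noting $\overline{M}=\{e\}$.
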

\begin{proof}
First suppose that $\delta(\c )=\kappa(\c )$. If \g is cyclic then we have nothing to prove. If possible, let \g be non-cyclic group and it does not have a maximal cyclic subgroup of order $2$. By Theorem \ref{minimum-degree}, $\delta(\c )=|M|-1$, where $M \in \m$ such that $|M|$ is minimum. By Lemma \ref{cut-set}, $|\overline{M}|$ is a cut-set. Note that the generators of $M$ does not belong to $\overline{M}$. Consequently, we get $$\kappa(\mathcal{P}_E(G))\leq |\overline{M}|<|M|-1=\delta(\mathcal{P}_E(G))$$
 which is not possible. Thus, \g must have a maximal cyclic subgroup of order $2$. 
 
 To prove the converse part, suppose that \g is cyclic. Then $\delta(\c )=\kappa(\c )=n-1$ [cf. Theorem \ref{complete}]. 
 If \g is non-cyclic and it has a maximal cyclic subgroup $M$ of order $2$, then by Lemma \ref{cut-set}, $\overline{M}=\{e\}$ is a cut-set. It follows that $\kappa(\c)=1$. By Theorem \ref{minimum-degree}, $\delta(\c)=|M|-1=1$ and so $\delta(\c )=\kappa(\c )$.
 \end{proof}
 
 \section{Regularity of $\c$}
The identity element of the group $G$ is adjacent to all the other elements of $G$ in $\c$. Thus, $\c$ is regular if and only if $G$ is a finite cyclic group (cf. Theorem \ref{complete}). The proper enhanced power graph $\d$ is the subgraph of $\c$ induced by $G \setminus \{e\}$. In this section, we classify the group $G$ such that $\d$ is (strongly) regular.
 \begin{theorem}{\label{regular}}
 Let $G$ be a finite group. Then $\d$ is regular if and only if one of the following holds:
 \begin{enumerate}
     \item[(i)] $G$ is a cyclic group.
     \item[(ii)] $|M_i|=|M_j|$ and $M_i\cap M_j =\{e\}$, where $M_i, M_j \in \m$.
 \end{enumerate}
 \end{theorem}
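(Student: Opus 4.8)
The plan is to reduce regularity to a single numerical invariant attached to each vertex. For a non-identity element $x \in G$, note that $y$ is adjacent to $x$ in $\d$ exactly when $\langle x, y\rangle$ is cyclic, equivalently when $y$ lies in some maximal cyclic subgroup that also contains $x$. Hence the closed neighbourhood of $x$ is $U(x) \setminus \{e\}$, where $U(x) = \bigcup_{M \in \m,\, x \in M} M$ is the union of all maximal cyclic subgroups through $x$. Since $e, x \in U(x)$ are two distinct elements (one is not a vertex, the other not its own neighbour), this yields the clean formula $\deg(x) = |U(x)| - 2$. Consequently $\d$ is regular if and only if $|U(x)|$ is the same for every $x \in G \setminus \{e\}$, and the whole argument reduces to controlling the size of this union.

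For the sufficiency direction I would dispatch the two cases quickly. If $G$ is cyclic then $G$ itself is the only maximal cyclic subgroup, so $U(x) = G$ for all $x$ and $\d$ is the complete graph $K_{|G|-1}$ (cf. Theorem \ref{complete}), which is regular. If instead any two distinct members of $\m$ have the same order $m$ and meet only in $e$, then no non-identity $x$ can lie in two of them, so each such $x$ sits in a unique maximal cyclic subgroup $M$ with $U(x) = M$; thus $|U(x)| = m$ is constant and $\d$, being a disjoint union of cliques $K_{m-1}$, is $(m-2)$-regular.

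The substance is the necessity direction, so assume $\d$ is regular with common degree $d$ and that $G$ is non-cyclic (so $|\m| \ge 2$, since a unique maximal cyclic subgroup would force $G$ to be cyclic). First I would fix the orders: if $g$ generates a maximal cyclic subgroup $M$, then $g$ belongs to no other maximal cyclic subgroup (any $M'$ containing $g$ contains $\langle g\rangle = M$, forcing $M' = M$), so $U(g) = M$ and $\deg(g) = |M| - 2$. Regularity therefore forces $|M| = d+2$ for \emph{every} $M \in \m$; all maximal cyclic subgroups share a common order $m := d+2$. Next I would rule out nontrivial overlaps: if distinct $M_i, M_j \in \m$ shared a non-identity element $x$, then $U(x) \supseteq M_i \cup M_j$, and since $M_i \cap M_j$ is a proper subgroup of the cyclic group $M_i$ of order $m$ we get $|M_i \cap M_j| \le m/2$, whence $|U(x)| \ge |M_i \cup M_j| = 2m - |M_i\cap M_j| \ge 3m/2 > m = |U(g)|$. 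This contradicts the constancy of $|U(x)|$, so any two distinct maximal cyclic subgroups intersect trivially, giving condition (ii).

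The only genuinely delicate point is the neighbourhood identity $\deg(x) = |U(x)| - 2$; once it is in hand, the generators pin down the common order and a one-line inclusion–exclusion bound forces the trivial intersections. I expect no step beyond this to present a real obstacle, and the case-(ii) description of $\d$ as a disjoint union of cliques should moreover feed directly into the strongly regular classification that follows.
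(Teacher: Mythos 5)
Your proposal is correct and follows essentially the same route as the paper's proof: compare the degree of a generator of a maximal cyclic subgroup (which is $|M|-2$) across different $M$ to force equal orders, then observe that a non-identity element of $M_i\cap M_j$ would have degree at least $|M_i\cup M_j|-2>|M_i|-2$, contradicting regularity. Your explicit neighbourhood formula $\deg(x)=|U(x)|-2$ and the bound $|M_i\cap M_j|\le m/2$ merely make precise what the paper uses implicitly.
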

 \begin{proof}
  Suppose that $\d$ is regular. If $G$ is cyclic then there is nothing to prove. We may now suppose that $G$ is a non-cyclic group. 
  On contrary, assume that $|M_i|\neq |M_j|$ for some $i$ and $ j$. Further, assume that $x$ and $y$ are generators of $M_i$ and $M_j$, respectively. Notice that $deg(x)=|M_i|-2$ and $deg(y)=|M_j|-2$; a contradiction to the regularity of $\d$. Thus, for $M_i, \ M_j \in \m $, we have $|M_i|=|M_j|$. Moreover, if $M_i\cap M_j \neq \{e\}$ for some $i, j \ (i\neq j)$ then for $a\neq e \in M_i\cap M_j$, we obtain $deg(a)\geq |M_i \cup M_j|-2 \neq |M_i|-2= deg(x)$. Consequently, $\d$ is not regular; a contradiction.\\
  Conversely, if $G$ is a cyclic group then by Theorem \ref{complete}, $\d$ is a complete graph and so is regular. We may now suppose that $G$ is non-cyclic. If $G$ satisfy condition (ii) then note that every element of $G\setminus \{e\}$ lies in exactly one maximal cyclic subgroup of $G$. Consequently, for each $x\in G\setminus \{e\}$ we have $deg(x)=|M_i|-2$, where $M_i$ is the maximal cyclic subgroup of $G$ containing $x$. Hence, $\d$ is regular.
 \end{proof}
A strongly regular graph is always regular. However, the converse need not be true. We show that the converse is also true for $\c$ in the following theorem.
 \begin{theorem}{\label{regular strongly regular}}
 Let $G$ be a finite group. Then $\d$ is regular if and only if $\d$ is strongly regular.
 \end{theorem}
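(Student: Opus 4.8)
The plan is to prove only the non-trivial implication, since a strongly regular graph is regular by definition. So I would assume that $\d$ is regular and deduce that it is strongly regular, feeding in the structural dichotomy already established in Theorem \ref{regular}: either \g is cyclic, or all maximal cyclic subgroups share a common order $m$ and meet pairwise in $\{e\}$.

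First I would dispose of the cyclic case. If \g is cyclic, then by Theorem \ref{complete} the graph $\c$ is complete, so $\d$ is a complete graph on $|G|-1$ vertices; since it has no non-adjacent pair, the condition on $\mu$ is vacuous and $\d$ is (trivially) strongly regular.

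The heart of the argument is condition (ii), where I would show that $\d$ is a disjoint union of cliques. The pairwise trivial intersection forces every non-identity element to lie in exactly one maximal cyclic subgroup, which I would record first. Next, if $x\sim y$ in $\d$, then $x,y\in\langle z\rangle$ for some $z$, and $\langle z\rangle$ lies inside some maximal cyclic subgroup $M$; hence both $x$ and $y$ belong to that same $M$. Consequently no edge joins two distinct maximal cyclic subgroups, while the subgraph induced on each $M_i\setminus\{e\}$ is complete (as $M_i$ is cyclic). Writing $t=|\m|$ and $m=|M_i|$ for the common order, this identifies $\d$ as the disjoint union $t\,K_{m-1}$ of $t$ copies of $K_{m-1}$.

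Finally I would read off the parameters directly from this structure. The graph is $(m-2)$-regular on $t(m-1)$ vertices; two adjacent vertices lie in a common clique and so have exactly $m-3$ common neighbours, whereas two non-adjacent vertices lie in different cliques and have none. Thus $\d$ is strongly regular with parameters $(t(m-1),\,m-2,\,m-3,\,0)$, where $t\ge 2$ because a non-cyclic group has at least two maximal cyclic subgroups (the degenerate case $m=2$ merely gives the edgeless graph, which is still strongly regular). I do not anticipate a serious obstacle here: the only points needing care are the uniqueness of the maximal cyclic subgroup containing a given element and the impossibility of a cross-edge, both of which follow cleanly from condition (ii) of Theorem \ref{regular} together with the fact that every cyclic subgroup sits inside a maximal one.
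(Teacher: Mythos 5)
Your proposal is correct and follows essentially the same route as the paper: reduce to the non-trivial implication, invoke Theorem \ref{regular}, handle the cyclic case via completeness, and in case (ii) observe that $\d$ decomposes into disjoint cliques of size $m-1$, yielding parameters $(n, m-2, m-3, 0)$. The only difference is that you make the disjoint-union-of-cliques structure fully explicit, whereas the paper states the common-neighbour counts directly; the substance is identical.
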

 
 \begin{proof}
 To prove the result, it is sufficient to show that if $\d$ is regular then $\d$ is strongly regular. Suppose that $\d$ is regular then $G$ must satisfy one of the conditions given in Theorem \ref{regular}. If $G$ is cyclic then being a complete graph, $\d$ is strongly regular. If $G$ satisfies condition (ii), then by the proof of Theorem \ref{regular}, for each $x\in V(\d)$ we obtain $deg(x)=m-2$, where $m$ is the order of a maximal cyclic subgroup containing $x$. For $m=2$, $\d$ is a null graph and so is strongly regular. If $m\geq 3$, then observe that in $\d$, each pair of adjacent vertices has exactly $m-3$ common neighbours and each pair of non-adjacent vertices has no common neighbour.  Hence, $\d$ is strongly regular with parameters $(n,m-2,m-3,0)$. 
 \end{proof}
In view of {\rm \cite[Theorem 28]{a.Cameron2016}} and Theorem \ref{strongly regular}, we have the following corollary.
 \begin{corollary}
 If $G$ is non-cyclic $p$-group then $\d$ is regular if and only if the exponent of $G$ is $p$.
 \end{corollary}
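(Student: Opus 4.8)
The plan is to reduce the assertion about the proper enhanced power graph to the known classification of strongly regular proper power graphs in Theorem \ref{strongly regular}. The crucial first observation is that in a $p$-group every non-identity element has order a power of $p$; hence whenever two vertices $x,y$ lie in a common cyclic subgroup $\langle z\rangle$, the subgroup lattice of the cyclic $p$-group $\langle z\rangle$ is a chain, so $\langle x\rangle\subseteq\langle y\rangle$ or $\langle y\rangle\subseteq\langle x\rangle$. Thus $x$ and $y$ are already adjacent in the power graph, and the power graph and enhanced power graph of $G$ coincide. This is precisely the content invoked from {\rm \cite[Theorem 28]{a.Cameron2016}}, and restricting to $G\setminus\{e\}$ it yields that $\d$ equals the proper power graph $\mathcal{P}^*(G)$. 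This identification is the engine of the whole argument.

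With this in hand I would dispatch both directions through the two cited theorems. For the reverse implication, if the exponent of $G$ is $p$, then by Theorem \ref{strongly regular} the proper power graph of $G$ is strongly regular; since $\d=\mathcal{P}^*(G)$, the graph $\d$ is strongly regular and, in particular, regular. For the forward implication, suppose $\d$ is regular. By Theorem \ref{regular strongly regular}, $\d$ is then strongly regular, and using $\d=\mathcal{P}^*(G)$ we conclude that the proper power graph of $G$ is strongly regular. Theorem \ref{strongly regular} then forces $\exp(G)=p$ or $\exp(G)=p^m$, where $|G|=p^m$.

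The last step, and the only place demanding care, is to rule out the possibility $\exp(G)=p^m$. If $\exp(G)=p^m=|G|$, then $G$ contains an element of order $|G|$ and is therefore cyclic, contradicting the hypothesis that $G$ is non-cyclic; hence $\exp(G)=p$, which closes the equivalence. I expect no computational difficulty anywhere in this plan: the substantive point is to invoke {\rm \cite[Theorem 28]{a.Cameron2016}} correctly so as to justify $\d=\mathcal{P}^*(G)$, after which Theorem \ref{regular strongly regular} and Theorem \ref{strongly regular} assemble the result immediately, with the non-cyclicity hypothesis serving only to discard the $\exp(G)=p^m$ branch.
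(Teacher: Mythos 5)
Your proposal is correct and follows essentially the same route the paper intends: identify $\d$ with the proper power graph of the $p$-group (the content of the cited Theorem 28, which you also justify directly via the chain of subgroups of a cyclic $p$-group), then combine Theorem \ref{regular strongly regular} with Theorem \ref{strongly regular}, using non-cyclicity to exclude the case $\exp(G)=p^m$. Nothing is missing.
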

\begin{theorem}{\label{regular p group}}
 Let $G$ be a non-cyclic nilpotent group. Then $\d$ is regular if and only if $G$ is a $p$-group with exponent $p$.
 \end{theorem}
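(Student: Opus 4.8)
The plan is to leverage the classification already obtained in Theorem~\ref{regular} together with the structural description of maximal cyclic subgroups of nilpotent groups in Lemma~\ref{on connectivity}, so as to reduce the nilpotent case to the $p$-group case settled by the corollary immediately preceding this theorem. The converse direction is immediate: if \g is a non-cyclic $p$-group of exponent $p$, that corollary already gives that \d is regular. Hence all the content lies in the forward direction, where I would assume \d is regular with \g non-cyclic nilpotent and write $G=P_1P_2\cdots P_r$, the goal being to force $r=1$.

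By Theorem~\ref{regular}(ii), since \g is non-cyclic, any two distinct members of \m have equal order and intersect in $\{e\}$; only the trivial-intersection half of this is needed below. First I would observe that not every $P_i$ can be cyclic, for a direct product of cyclic groups of pairwise coprime orders is cyclic, contradicting that \g is non-cyclic. Thus some Sylow subgroup $P_s$ is non-cyclic, and therefore $P_s$ has at least two distinct maximal cyclic subgroups $N_1\neq N_2$ (a finite group with a unique maximal cyclic subgroup equals that subgroup and so is cyclic).

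The key step is to suppose, toward a contradiction, that $r\geq 2$. Using Lemma~\ref{on connectivity}, I would fix a maximal cyclic subgroup $M_i$ of $P_i$ for each $i\neq s$ and form the two maximal cyclic subgroups of $G$
$$M=M_1\cdots N_1\cdots M_r,\qquad M'=M_1\cdots N_2\cdots M_r,$$
differing only in the $P_s$-component. Since the Sylow components of a maximal cyclic subgroup are recovered as its intersections with the $P_i$, the inequality $N_1\neq N_2$ gives $M\neq M'$; yet $M\cap M'\supseteq\prod_{i\neq s}M_i$, which is nontrivial because $r\geq 2$ forces at least one factor $M_i$ with $i\neq s$ to be nontrivial. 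This contradicts the trivial-intersection condition of Theorem~\ref{regular}(ii). Hence $r=1$, so \g is a $p$-group, and being non-cyclic with \d regular, the preceding corollary yields that \g has exponent $p$.

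The argument is short, and the only delicate point—hence the main obstacle—is justifying $M\neq M'$ and $M\cap M'\neq\{e\}$ simultaneously: that altering a single Sylow factor genuinely produces a distinct maximal cyclic subgroup of $G$, while the shared factors force the intersection to remain nontrivial. This is precisely where Lemma~\ref{on connectivity} does the real work, guaranteeing both the product form $M_1\cdots M_r$ of every maximal cyclic subgroup and the uniqueness of its Sylow components.
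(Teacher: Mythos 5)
Your proof is correct and follows essentially the same route as the paper: both arguments pick a non-cyclic Sylow subgroup, use Lemma~\ref{on connectivity} to build two maximal cyclic subgroups of $G$ differing only in that Sylow component, and derive a contradiction with the trivial-intersection condition of Theorem~\ref{regular}(ii) when $r\geq 2$. Your write-up is in fact slightly more careful than the paper's in justifying that $M\neq M'$ and that their intersection is nontrivial.
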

 \begin{proof}
Let $G$ be a non-cyclic nilpotent group of order $n=p_1^{\lambda _1}p_2^{\lambda _2}\cdot \cdot \cdot p_r^{\lambda _r}$. To prove our result it is sufficient to prove that if $\d$ is regular then $G$ is a $p$-group. If possible, let $r \ge 2$. Since $G$ is a non-cyclic group there exists a non-cyclic Sylow subgroup $P_i$. Consequently, $P_i$  has at least two maximal cyclic subgroups, namely: $M_{i}$ and $M'_{i}$. Consider the maximal cyclic subgroups $M=M_1M_2\cdots M_{i}\cdots M_r$ and $M'=M_1M_2\cdots M'_{i}\cdots M_r$ of $G$, here $M_j$ is a maximal cyclic subgroup of $P_j$ for $j\in [r]\setminus \{i\}$. By Lemma \ref{on connectivity}, we obtain that $M$ and $M'$ are maximal cyclic subgroups of $G$ such that $M\cap M' \neq \{e\}$; a contradiction of Theorem \ref{regular}. Thus, $r = 1$ and so $G$ is $p$-group.
 \end{proof}

\begin{corollary}
Let $G$ be a finite non-cyclic abelian group. Then $\d$ is regular if and only if $G$ is an elementary abelian $p$-group. 
\end{corollary}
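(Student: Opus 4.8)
The plan is to obtain this corollary as an immediate specialization of Theorem~\ref{regular p group}. The first observation is that every finite abelian group is nilpotent; this is immediate, and also follows from Theorem~\ref{nilpotent} since all Sylow subgroups of an abelian group are automatically normal. Hence a finite non-cyclic abelian group $G$ satisfies the hypothesis of Theorem~\ref{regular p group}, and applying that theorem yields at once that $\d$ is regular if and only if $G$ is a $p$-group with exponent $p$.

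It then remains to identify, within the abelian setting, the condition ``$p$-group of exponent $p$'' with ``elementary abelian $p$-group,'' and I would argue both directions. If $G$ is an abelian $p$-group with $\exp(G)=p$, then every non-identity element has order exactly $p$; an abelian group all of whose non-identity elements have order $p$ is, by definition, elementary abelian, so $G \cong \mathbb{Z}_p^{k}$ with $k\geq 2$ (the bound $k\geq 2$ coming from the non-cyclicity of $G$). Conversely, an elementary abelian $p$-group is abelian, is clearly a $p$-group, and has exponent $p$. Thus the two descriptions coincide, and chaining this equivalence with Theorem~\ref{regular p group} gives the stated result.

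I do not anticipate any genuine obstacle here: the combinatorial content is already carried by Theorem~\ref{regular p group} (which itself rests on the structural analysis of maximal cyclic subgroups via Lemma~\ref{on connectivity} and Theorem~\ref{regular}), and what is left is the elementary group-theoretic fact that an abelian $p$-group of exponent $p$ is precisely an elementary abelian $p$-group. The single point deserving a word of care is making the exponent condition explicit---namely that for a $p$-group, $\exp(G)=p$ is equivalent to every nontrivial element having order $p$, which in the abelian case forces the structure $\mathbb{Z}_p^{k}$.
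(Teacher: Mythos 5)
Your proposal is correct and matches the paper's (implicit) argument exactly: the corollary is stated as an immediate consequence of Theorem~\ref{regular p group}, using that finite abelian groups are nilpotent and that an abelian $p$-group of exponent $p$ is precisely an elementary abelian $p$-group. Nothing further is needed.
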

Based on the results obtained in this section, we posed the following conjecture which we are not able to prove. \\

\textbf{Conjecture:} Let $G$ be a finite non-cyclic group. If $\d$ is regular then $G$ is a $p$-group with exponent $p$.

\section{The vertex connectivity of $\c$}
In this section, we investigate the vertex connectivity of the enhanced power graph of some nilpotent groups.  Let $G=P_1P_2\cdots P_r$ be a nilpotent group. First, we prove that the enhanced power graph of a finite nilpotent group $G$ is isomorphic to the strong product of $\a$ ($1 \le i \le r$). Using this and ascertaining a minimum cut-set, we obtain the vertex connectivity of $\c$, where $G$ is nilpotent group such that its each Sylow subgroups is cyclic except $P_{k}$ for some $k \in [r]$. For $x\in G$, define $\tau_x=\{j\in [r]: \ x_j \neq e \}.$ Note that if $\langle x\rangle \in \m$ then $\tau_x=[r]$.

\begin{lemma}{\label{first}}
Let $H=\langle x \rangle  $ and $x=\prod\limits_{i\in \tau _x} x_i$. Then $\langle x_i \rangle  \subseteq \langle x \rangle \ \textit{for all} \  i \in \tau _x$.
\end{lemma}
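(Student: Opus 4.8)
The plan is to exploit the coprimality of the prime-power orders of the components $x_i$. First I would observe that, since $G = P_1 P_2 \cdots P_r$ is nilpotent, Theorem \ref{nilpotent} tells us $G$ is the direct product of its Sylow subgroups, so the factors $x_i$ ($i \in \tau_x$) lying in distinct Sylow subgroups pairwise commute. Moreover each $o(x_i)$ is a power of the prime $p_i$, and since the $p_i$ are distinct, the orders $o(x_i)$ ($i \in \tau_x$) are pairwise coprime. Because the factors commute and have pairwise coprime orders, the order of $x$ is the product $o(x) = \prod_{i \in \tau_x} o(x_i)$.

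The key step is then an explicit extraction of each component as a power of $x$. Fix $i \in \tau_x$ and set $m_i = o(x)/o(x_i) = \prod_{j \in \tau_x \setminus \{i\}} o(x_j)$. Since $o(x_j) \mid m_i$ for every $j \in \tau_x \setminus \{i\}$, raising $x = \prod_{j \in \tau_x} x_j$ to the power $m_i$ annihilates every factor except the $i$-th, which gives $x^{m_i} = x_i^{m_i}$. Because $\gcd(m_i, o(x_i)) = 1$, the element $x_i^{m_i}$ is again a generator of $\langle x_i \rangle$, whence $\langle x_i \rangle = \langle x_i^{m_i} \rangle = \langle x^{m_i} \rangle \subseteq \langle x \rangle$, as required.

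I do not anticipate a serious obstacle here; this is essentially the statement that a cyclic group decomposes as the internal direct product of its Sylow subgroups. The only points needing genuine care are the justification that the factors commute (which is what lets us write $\left( \prod_j x_j \right)^{m_i} = \prod_j x_j^{m_i}$ and conclude that the orders multiply) and the coprimality $\gcd(m_i, o(x_i)) = 1$, both of which follow immediately from the nilpotency of $G$ via Theorem \ref{nilpotent} together with the fact that the $p_i$ are distinct primes.
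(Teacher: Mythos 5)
Your proposal is correct and follows essentially the same route as the paper: both raise $x$ to the power $\prod_{j \ne i} o(x_j)$ to isolate $x_i^{m_i}$ and then use coprimality of that exponent with $o(x_i)$ to conclude $\langle x_i \rangle = \langle x^{m_i} \rangle \subseteq \langle x \rangle$. The extra justification you give for the commutation of the components and the multiplicativity of the order is a welcome (and correct) elaboration of what the paper leaves implicit via Theorem \ref{nilpotent}.
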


\begin{proof}
Consider $i_0\in \tau _x$ and $l=\prod\limits_{i\in [r] \setminus \{i_0\}}o(x_{i})$. Then $x^l=x_{i_0}^l$ [cf. Theorem \ref{nilpotent} ]. Since $l$ is co-prime to $o(x_{i_0})$, we have $\langle x^l\rangle =\langle x_{i_0}^l\rangle =\langle x_{i_0}\rangle $ and so $x_{i_0}\in \langle x^l \rangle $. Hence, $ \langle  x_{i_0} \rangle  \subseteq  \langle  x \rangle  $.
\end{proof}

\begin{lemma}{\label{generation}} Let $G$ be a nilpotent group. Then
$  \langle  x \rangle  = \langle  \prod\limits_{i\in \tau _x} x_i \rangle   =\prod\limits_{i\in \tau _x}  \langle  x_i \rangle  ,
$
where  $  \langle  x_i \rangle   \langle  x_j \rangle   = \{ab: a\in  \langle  x_i \rangle \    \textit{and} \   b \in  \langle  x_j \rangle  \}.$
\end{lemma}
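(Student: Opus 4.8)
The plan is to prove the two displayed equalities separately, the first being immediate and the second relying on the nilpotency of $G$. Since $x_j = e$ for every $j \notin \tau_x$, we have $x = \prod_{i\in\tau_x} x_i$, and therefore the equality $\langle x\rangle = \langle \prod_{i\in\tau_x} x_i\rangle$ holds by definition. The work is entirely in establishing $\langle x\rangle = \prod_{i\in\tau_x} \langle x_i\rangle$, and I would prove this by showing the product set is a subgroup and then verifying inclusion in both directions.

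First I would check that the product set $S = \prod_{i\in\tau_x} \langle x_i\rangle$ is a subgroup of $G$. The essential point is that for distinct $i,j \in \tau_x$ every element of $\langle x_i\rangle$ has order a power of $p_i$ and every element of $\langle x_j\rangle$ has order a power of $p_j$; these orders are relatively prime, so by Theorem \ref{nilpotent}(iv) the two cyclic subgroups commute elementwise. Consequently the subgroups $\langle x_i\rangle$, $i \in \tau_x$, pairwise commute, and a product of finitely many pairwise commuting subgroups is closed under multiplication and inversion, hence is a subgroup.

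For the inclusion $\prod_{i\in\tau_x} \langle x_i\rangle \subseteq \langle x\rangle$, I would invoke Lemma \ref{first}, which gives $\langle x_i\rangle \subseteq \langle x\rangle$ for every $i \in \tau_x$. Any element of $S$ has the form $\prod_{i\in\tau_x} a_i$ with $a_i \in \langle x_i\rangle \subseteq \langle x\rangle$, and since $\langle x\rangle$ is a subgroup this product lies in $\langle x\rangle$. For the reverse inclusion, observe that $x = \prod_{i\in\tau_x} x_i$ with each $x_i \in \langle x_i\rangle$, so $x \in S$; as $S$ is a subgroup, it follows that $\langle x\rangle \subseteq S$. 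Combining the two inclusions yields the claimed equality.

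The argument is essentially routine once nilpotency is available; the one step demanding genuine care is verifying that $S$ is a true subgroup, that is, that its factors commute, which is precisely where Theorem \ref{nilpotent}(iv) together with the coprimality of the relevant orders is used. I note that the same coprimality forces the factors to intersect trivially, so the product is in fact an internal direct product, although only the subgroup property is needed for the statement at hand.
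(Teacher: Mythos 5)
Your proof is correct and follows essentially the same route as the paper: the containment $\prod_{i\in\tau_x}\langle x_i\rangle \subseteq \langle x\rangle$ rests on Lemma \ref{first} in both arguments, and the reverse containment rests on the pairwise commutativity of the $x_i$ coming from nilpotency. You are somewhat more explicit than the paper (which dismisses $\langle \prod x_i\rangle \subseteq \prod\langle x_i\rangle$ as ``clear'') in verifying that the product set is actually a subgroup, which is a welcome bit of extra care but not a different method.
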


\begin{proof}
Clearly, $\langle  \prod\limits_{i\in \tau _x} x_i \rangle   \subseteq \prod\limits_{i\in \tau _x}  \langle  x_i \rangle$. If $a\in \prod\limits_{i\in \tau _x}\langle x_i \rangle$. Then $a=\prod\limits_{i\in \tau _x}a_i$ such that $a_i\in \langle x_i \rangle$. Thus, $a_i=x_i^{k_i}$ for some $k_i\in \mathbb{N}$. By Lemma \ref{first}, $a_i=x^{\lambda_i k_i}$ for some $\lambda_i \in \mathbb{N}$ and so $a=x^{\sum\limits_{i\in \tau _x}\lambda _i k_i }$. Consequently, we get $a\in \langle x \rangle$. Thus, the result holds.
\end{proof}
\begin{lemma}{\label{isomorphic lemma}}
Let $G$ be a nilpotent group such that $x = \prod\limits_{i = 1}^r x_i$ and $y =  \prod\limits_{i = 1}^r y_i$.  Then $x\sim y$ in $\c$ if and only if    $x_i\sim y_i$  in $\mathcal{P}_E(P_i)$ whenever $x_i \ne y_i$.
\end{lemma}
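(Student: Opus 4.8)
The plan is to reformulate adjacency in an enhanced power graph purely in terms of cyclic subgroups: for distinct elements $u,v$ of any group $H$, one has $u \sim v$ in $\mathcal{P}_E(H)$ precisely when $\langle u,v\rangle$ is cyclic, since $u,v$ lie in a common cyclic subgroup $\langle z\rangle$ if and only if $\langle u,v\rangle \subseteq \langle z\rangle$ is cyclic (a subgroup of a cyclic group being cyclic, and conversely a generator of $\langle u,v\rangle$ serving as the common $z$). Throughout I would assume $x \neq y$, so that the left-hand adjacency is meaningful; note that $x \neq y$ forces $x_i \neq y_i$ for at least one $i$ by uniqueness of the Sylow decomposition, so the right-hand condition is not vacuous. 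With this reformulation the claim reads: $\langle x,y\rangle$ is cyclic if and only if $\langle x_i,y_i\rangle$ is cyclic for every index $i$ with $x_i \neq y_i$.

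For the forward direction I would suppose $x \sim y$, so that $x,y \in \langle z\rangle$ for some $z \in G$, whence $\langle x\rangle \subseteq \langle z\rangle$ and $\langle y\rangle \subseteq \langle z\rangle$. Lemma \ref{first} gives $\langle x_i\rangle \subseteq \langle x\rangle$ and $\langle y_i\rangle \subseteq \langle y\rangle$ for each $i$, so $x_i, y_i \in \langle z\rangle$. Then $\langle x_i,y_i\rangle$ is a subgroup of the cyclic group $\langle z\rangle$, hence cyclic, and since it is also contained in $P_i$, the elements $x_i, y_i$ lie in a common cyclic subgroup of $P_i$; this yields $x_i \sim y_i$ in $\mathcal{P}_E(P_i)$ whenever $x_i \neq y_i$.

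For the converse the idea is to assemble a single cyclic subgroup of $G$ containing both $x$ and $y$ out of the componentwise data. For each $i$ set $C_i = \langle x_i, y_i\rangle \le P_i$: if $x_i = y_i$ then $C_i = \langle x_i\rangle$ is cyclic, while if $x_i \neq y_i$ the hypothesis forces $C_i$ cyclic. Choosing a generator $z_i$ of $C_i$ and putting $z = z_1 z_2 \cdots z_r$ (which is the canonical Sylow decomposition of $z$, as $z_i \in P_i$), the $z_i$ have pairwise coprime orders, so Lemma \ref{generation} gives $\langle z\rangle = \prod_{i=1}^r \langle z_i\rangle = \prod_{i=1}^r C_i$. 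Since $x_i \in C_i$ for every $i$, it follows that $x = \prod_i x_i \in \prod_i C_i = \langle z\rangle$, and likewise $y \in \langle z\rangle$; thus $x, y$ share the cyclic subgroup $\langle z\rangle$ and $x \sim y$ in $\c$.

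The bookkeeping is mild, and I expect the only point needing genuine care to be the converse, namely verifying that the glued element $z = \prod_i z_i$ really satisfies $\langle z\rangle = \prod_i \langle z_i\rangle$ and hence absorbs each $x_i$ and $y_i$ simultaneously. This is supplied exactly by Lemma \ref{generation} together with the coprimality of orders across distinct Sylow subgroups guaranteed by Theorem \ref{nilpotent}, and the degenerate indices $x_i = y_i$ are handled harmlessly by taking $C_i = \langle x_i\rangle$. No structure beyond Lemmas \ref{first} and \ref{generation} should be required.
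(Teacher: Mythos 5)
Your proposal is correct and follows essentially the same route as the paper: the forward direction pushes $x_i,y_i$ into $\langle z\rangle$ via Lemma \ref{first}, and the converse glues componentwise cyclic subgroups into a single $\langle z\rangle = \prod_i \langle z_i\rangle$ via Lemma \ref{generation}. The only cosmetic difference is that you take $z_i$ to generate $\langle x_i,y_i\rangle$ itself, whereas the paper takes $z_i$ with $\langle x_i,y_i\rangle \subseteq \langle z_i\rangle$ on the indices where $x_i\sim y_i$ and uses $x_i$ on the remaining indices.
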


\begin{proof}
First suppose that $x \sim y$ in $\c$. Then there exist $z \in G$ such that $x, y \in \langle z \rangle$. We may now suppose that $x_i \ne y_i$ for some $i$. By Lemma \ref{first}, $x_i \in \langle x \rangle \subseteq \langle z \rangle$. Similarly, $y_i \in \langle z \rangle$. Thus, $x_i, y_i \in P_i$ and $\langle x_i, y_i \rangle \subseteq \langle z \rangle$ follows that $\langle x_i, y_i \rangle$ is a cyclic subgroup of $P_i$. Thus, $x_i\sim y_i$  in $\mathcal{P}_E(P_i)$.  Conversely, suppose that $x_i\sim y_i$  in $\mathcal{P}_E(P_i)$ for $x_i \ne y_i$. Consider $K=\{i\in [r]: x_i\sim y_i \  \text{in} \ \a \} $. Consequently, for $i\in K$, we have $\langle x_i,y_i\rangle \subseteq \langle z_i \rangle$ for some $z_i\in P_i$. Choose $z=\prod\limits_{i\in K}z_i\cdot \prod\limits_{i\in [r]\setminus K}x_i.$ Thus by Lemma \ref{generation},
 $\langle z\rangle = \prod\limits_{i\in K}\langle z_i\rangle \cdot \prod\limits_{i\in [r]\setminus K }\langle x_i\rangle$. Consequently, $x = \prod\limits_{i\in [r]} x_i\in \langle z\rangle$ and $y = \prod\limits_{i\in [r]} y_i\in \langle z\rangle$. Hence, $x\sim y$ in $\c$.
 \end{proof}
\begin{theorem}{\label{isomorphism theorem}}
Let \g be a nilpotent group. Then $$\c \cong \mathcal{P}_E(P_1) \boxtimes \mathcal{P}_E(P_2) \boxtimes \cdots \boxtimes \mathcal{P}_E(P_r) $$ where $P_i$ is the Sylow $p_i$-subgroup of $G.$
\end{theorem}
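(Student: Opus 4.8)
The plan is to exhibit an explicit graph isomorphism induced by the group-theoretic decomposition supplied by Theorem \ref{nilpotent}. Since $G$ is nilpotent, it is the internal direct product of its Sylow subgroups, so every $x \in G$ has a unique expression $x = x_1 x_2 \cdots x_r$ with $x_i \in P_i$ for each $i \in [r]$. I would define the map
$$\phi : V(\c) \longrightarrow V(\mathcal{P}_E(P_1)) \times \cdots \times V(\mathcal{P}_E(P_r)), \qquad \phi(x) = (x_1, x_2, \ldots, x_r),$$
and prove that $\phi$ is an isomorphism onto the strong product $\mathcal{P}_E(P_1) \boxtimes \cdots \boxtimes \mathcal{P}_E(P_r)$.

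First I would check that $\phi$ is a bijection. By definition the vertex set of the strong product is the Cartesian product $V(\mathcal{P}_E(P_1)) \times \cdots \times V(\mathcal{P}_E(P_r)) = P_1 \times \cdots \times P_r$, and the uniqueness of the decomposition $x = x_1 \cdots x_r$ is exactly the statement that $\phi$ underlies the group isomorphism $G \cong P_1 \times \cdots \times P_r$: injectivity is the uniqueness of the factorization and surjectivity is its existence. In particular $\phi$ is a well-defined bijection on vertex sets.

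The substantive step is to verify that $\phi$ preserves both adjacency and non-adjacency, and here Lemma \ref{isomorphic lemma} does essentially all the work. For distinct $x, y \in G$ with $x = \prod_{i=1}^r x_i$ and $y = \prod_{i=1}^r y_i$, that lemma asserts that $x \sim y$ in $\c$ if and only if $x_i \sim y_i$ in $\mathcal{P}_E(P_i)$ whenever $x_i \neq y_i$. On the other hand, the defining rule of the strong product states that $\phi(x)$ and $\phi(y)$ are adjacent if and only if for each $i$ either $x_i = y_i$ or $x_i \sim y_i$ in $\mathcal{P}_E(P_i)$. Observing that the phrase ``for each $i$, either $x_i = y_i$ or $x_i \sim y_i$'' is logically identical to ``$x_i \sim y_i$ whenever $x_i \neq y_i$'', these two conditions coincide verbatim, so $x \sim y$ in $\c$ exactly when $\phi(x) \sim \phi(y)$ in the strong product.

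Since the adjacency criterion has already been packaged by Lemma \ref{isomorphic lemma}, no genuine obstacle remains; the only care needed is bookkeeping. I would make sure that the vertices of each factor $\mathcal{P}_E(P_i)$ are literally the elements of $P_i$, so that the components $x_i$ are legitimate vertices of the factor graphs, and that the two ``if and only if'' statements are matched in the correct direction, which guarantees that $\phi$ is a full isomorphism rather than merely an adjacency-preserving map. I expect the whole argument to reduce to a short paragraph combining the bijection of the second step with the definition of the strong product and Lemma \ref{isomorphic lemma}.
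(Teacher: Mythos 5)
Your proposal is correct and follows exactly the paper's own argument: the same map $x \mapsto (x_1,\ldots,x_r)$, bijectivity from the direct-product decomposition of a nilpotent group, and Lemma \ref{isomorphic lemma} matched against the adjacency rule of the strong product. The paper states this in two lines; you have merely spelled out the bookkeeping.
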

\begin{proof}
Let $x=x_1x_2\cdots x_r \in G$. Then define $\psi: V(\c) \rightarrow V( \mathcal{P}_E(P_1) \boxtimes \mathcal{P}_E(P_2) \boxtimes \cdots \boxtimes \mathcal{P}_E(P_r))$ such that $x \longmapsto (x_1, x_2,\ldots ,x_r)$ where $x_i\in P_i$. In view of Lemma \ref{isomorphic lemma}, note that $\psi$ is a graph isomorphism.
\end{proof}
\begin{lemma}{\label{intersection lemma}}
Let \g be a non-cyclic group and $T=\bigcap\limits_{M\in \m}M$. Then $T$ is contained in every cut-set of $\c$.
\end{lemma}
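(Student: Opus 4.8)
The plan is to exploit the fact that every element of $T$ is a \emph{dominating} (universal) vertex of $\c$, and then to argue that a dominating vertex can never survive the removal of a cut-set. So the proof splits naturally into an algebraic step (identifying $T$ with universal vertices) and a purely graph-theoretic step (universal vertices lie in every cut-set).

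First I would establish the key claim: each $t\in T$ is adjacent in $\c$ to every other vertex of $G$. Fix $t\in T$ and take any $x\in G$ with $x\neq t$. Since $G$ is finite, the cyclic subgroup $\langle x\rangle$ is contained in some maximal cyclic subgroup $M\in\m$. By the definition of $T=\bigcap_{M'\in\m}M'$, we have $t\in M$ as well, so both $x$ and $t$ lie in the cyclic subgroup $M$; writing $M=\langle c\rangle$ gives $x,t\in\langle c\rangle$, whence $t\sim x$ in $\c$. Thus every element of $T$ is a universal vertex. Note in particular that this already forces $\langle x \rangle \not\subseteq M$ to be impossible, which is exactly the containment we exploit.

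Next I would take an arbitrary cut-set $X$ of $\c$, so that $\c\setminus X$ is disconnected (into at least two nonempty components), and suppose for contradiction that some $t\in T$ satisfies $t\notin X$. Then $t$ is a vertex of $\c\setminus X$, and by the previous step it is adjacent to every remaining vertex; hence every vertex of $\c\setminus X$ is joined to $t$, so $\c\setminus X$ is connected, contradicting that $X$ is a cut-set. Therefore $t\in X$, and since $t\in T$ was arbitrary, $T\subseteq X$.

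The subtle point to handle carefully is the degenerate alternative in the definition of a cut-set, namely when $\c\setminus X$ consists of a single vertex: there the universal-vertex argument has no second vertex with which to witness connectivity, so that case does not directly yield a contradiction. I expect the algebraic step to be entirely routine; the only care needed is to arrange the disconnection so that at least two vertices survive. Here I would invoke Theorem~\ref{complete}: since $G$ is non-cyclic, $\c$ is not complete, so it has a pair of non-adjacent vertices, and any disconnection separating them leaves at least two vertices in $\c\setminus X$. For such genuine cut-sets—precisely the ones relevant to computing $\kappa(\c)$—the argument above applies verbatim, giving $T\subseteq X$.
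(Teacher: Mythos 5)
Your proof is correct and follows essentially the same route as the paper: show that every element of $T$ is a universal vertex of $\c$ (via the fact that any $x\in G$ lies in some $M\in\m$, which contains $T$), and conclude that such a vertex must belong to any set whose removal disconnects the graph. Your additional care about the degenerate clause in the definition of a cut-set (when only one vertex remains after removal) addresses a point the paper's proof silently skips, and your resolution via the non-completeness of $\c$ for non-cyclic $G$ is the right way to handle it.
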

\begin{proof}
Let $x\in T$ and $y(\neq x)\in G$. Since $y\in M$ for some $M \in \m$ and $x\in T$, we have $x\in M$. Consequently, $x\sim y$. It follows that $x$ is adjacent to every vertex of $\c$. Thus, $x$ must belongs to every cut-set of $\c$ and so is $T$.
\end{proof}

\begin{theorem}
Let $G=P_1P_2\cdots P_r$ be a non-cyclic nilpotent group of order $n=p_1^{\lambda _1} p_2^{\lambda  _2}...p_r^{\lambda _r}$ with $r\geq 2$. Suppose that each Sylow subgroup is cyclic except $P_k$ for some $k\in [r]$. If $P_k$ is not a generalized quaternion group, then the set $Q=P_1\cdot \cdot \cdot P_{k-1}P_{k+1}\cdot \cdot \cdot P_r$ is the only minimum cut-set of $\c$ and hence $\kappa(\c)=\frac{n}{p_k^{\lambda _k}}$. If $P_k$ is generalized quaternion group, then the set $Q'= Z(Q_{2^\alpha})P_2 \cdot \cdot \cdot P_r$ is the only minimum cut-set of $\c$ and hence $\kappa(\c)=\frac{n}{2^{\lambda _1-1}}$.
\end{theorem}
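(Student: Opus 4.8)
The plan is to locate the subgroup $T=\bigcap_{M\in\mathcal{M}(G)}M$, show that it coincides with $Q$ (resp.\ $Q'$), and then prove that $T$ is simultaneously contained in every cut-set and is itself a cut-set; this forces $T$ to be the unique minimum cut-set and gives $\kappa(\mathcal{P}_E(G))=|T|$.

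First I would pin down the maximal cyclic subgroups. By Lemma \ref{on connectivity} every $M\in\mathcal{M}(G)$ has the form $M_1M_2\cdots M_r$ with $M_i\in\mathcal{M}(P_i)$, and since $P_i$ is cyclic for $i\neq k$ we have $M_i=P_i$ there (Corollary \ref{contain}); hence $M=Q\cdot M_k$ with $M_k\in\mathcal{M}(P_k)$, and therefore $T=Q\cdot\big(\bigcap_{M_k\in\mathcal{M}(P_k)}M_k\big)$. The next step is a $p$-group computation. If $P_k$ is not generalized quaternion it has at least two subgroups of order $p_k$, and since two distinct subgroups of order $p_k$ cannot both lie in one cyclic $p_k$-group, the intersection of all maximal cyclic subgroups of $P_k$ is trivial; thus $T=Q$ and $|T|=n/p_k^{\lambda_k}$. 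If $P_k=Q_{2^\alpha}$ (so $p_k=2$ forces $k=1$), every nontrivial subgroup contains the unique involution, so that intersection is $Z(Q_{2^\alpha})$, giving $T=Q'$ and $|T|=n/2^{\lambda_1-1}$. By Lemma \ref{intersection lemma}, $T$ lies in every cut-set, so $\kappa(\mathcal{P}_E(G))\geq|T|$.

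It remains to show that $T$ is a cut-set, i.e.\ that $\mathcal{P}_E(G\setminus T)$ is disconnected. Here I would exploit Theorem \ref{isomorphism theorem} and Lemma \ref{isomorphic lemma}: projecting onto the $k$-th coordinate via $\rho(x)=x_k$, an edge $x\sim y$ forces either $x_k=y_k$ or $x_k\sim y_k$ in $\mathcal{P}_E(P_k)$, so every path in $\mathcal{P}_E(G)$ descends to a walk in $\mathcal{P}_E(P_k)$. Since $Q=\{x:x_k=e\}$ (resp.\ $Q'=\{x:x_1\in Z(Q_{2^\alpha})\}$), a path living inside $G\setminus T$ descends to a walk avoiding $\{e\}$ (resp.\ $Z(Q_{2^\alpha})$). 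Consequently, two elements of $P_k$ that lie in different components of the induced subgraph $\mathcal{P}_E(P_k\setminus\{e\})=\mathcal{P}_E^*(P_k)$ (resp.\ $\mathcal{P}_E(P_k\setminus Z(Q_{2^\alpha}))$), lifted to $G$ with all other coordinates equal to $e$, cannot be joined in $\mathcal{P}_E(G\setminus T)$. Thus the whole problem reduces to disconnecting those induced subgraphs of $\mathcal{P}_E(P_k)$.

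The main obstacle is exactly this final disconnection. For $\mathcal{P}_E^*(P_k)$ I would argue through the intersection graph on $\mathcal{M}(P_k)$: in a $p$-group two maximal cyclic subgroups meet nontrivially precisely when they share the unique subgroup of order $p_k$ at their bottom, so the components of $\mathcal{P}_E^*(P_k)$ are in bijection with the subgroups of order $p_k$ of $P_k$; as $P_k$ is non-cyclic and not generalized quaternion, the classical characterization of $p$-groups with a unique subgroup of order $p$ yields at least two such subgroups, whence $\mathcal{P}_E^*(P_k)$ is disconnected. For $P_k=Q_{2^\alpha}$ all maximal cyclic subgroups pairwise intersect in exactly $Z(Q_{2^\alpha})$, so deleting $Z(Q_{2^\alpha})$ leaves the clique on $\langle a\rangle\setminus Z(Q_{2^\alpha})$ together with the disjoint edges contributed by the order-$4$ maximal cyclic subgroups, a disconnected graph. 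Finally, combining $\kappa(\mathcal{P}_E(G))\geq|T|$ with the fact that $T$ is a cut-set of size $|T|$ and that every cut-set contains $T$, any minimum cut-set $S$ satisfies $T\subseteq S$ and $|S|=|T|$, so $S=T$; this identifies $T$ as the unique minimum cut-set and completes the proof.
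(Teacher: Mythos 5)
Your proposal is correct and follows the same skeleton as the paper's proof: show the candidate set is contained in every cut-set via Lemma \ref{intersection lemma} (after identifying it with $\bigcap_{M\in\mathcal{M}(G)}M$), show it is itself a cut-set by reducing to the Sylow factor $P_k$ through the strong-product decomposition of Theorem \ref{isomorphism theorem}, and deduce uniqueness of the minimum cut-set from these two facts. The genuine difference is in the key disconnection step. The paper simply cites Theorem 1 of Bera, Dey and Mukherjee \cite{a.bera2021connectivity} for the fact that $\{e\}$ is the unique minimum cut-set of $\mathcal{P}_E(P_k)$ when $P_k$ is non-cyclic and not generalized quaternion, and for the quaternion case merely writes ``by claim'' without verifying that $Z(Q_{2^\alpha})$ disconnects $\mathcal{P}_E(Q_{2^\alpha})$. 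You instead prove both disconnections directly: you note that the components of the proper enhanced power graph of a $p$-group are the fibres of the map sending a non-identity element $x$ to the unique order-$p$ subgroup of $\langle x\rangle$, and then invoke the classical fact that a $p$-group with a unique subgroup of order $p$ is cyclic or generalized quaternion to produce at least two fibres; for $Q_{2^\alpha}$ you observe that the maximal cyclic subgroups pairwise intersect exactly in the centre, so its removal leaves a disjoint union of cliques. Your route is thus self-contained and fills in details the paper delegates to a citation or leaves implicit, at the cost of being somewhat longer; all the individual steps (the computation of $T$, the projection argument for why $T$ is a cut-set, and the final uniqueness deduction) check out.
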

\begin{proof}
First suppose that $P_k$ is not a generalized quaternion group. By Corollary \ref{contain}, $Q$ is contained in every maximal cyclic subgroup of $G$. By Lemma \ref{intersection lemma}, $Q$ is contained in every cut-set of $\c $. Now, to prove  our result we first prove the following claim.\\
{\rm \textbf{Claim:}} Let $G=P_1 P_2\cdots P_r$ be a finite nilpotent group and  $T_i$ be a cut-set of $\a$. Then $T=P_1 \cdots P_{i-1} T_i P_{i+1} \cdots P_r$ is a cut-set of $\c$.\\
\textit{Proof of the claim:} Let $T_i$ be a cut-set of $\a$ and let $a,b \in P_i$ such that there exist no path between $a$ and $b$ in $\mathcal{P}_E(P_i \setminus T_i)$. It follows that, for the  isomorphism $\psi$  defined in the proof of Theorem \ref{isomorphism theorem}, there is no path between $\psi(a)$ and $\psi(b)$ in the subgraph induced by $V(\boxtimes _{i=1}^r \a) \setminus \psi(T)$. Consequently, there is no path between $a$ and $b$ in $\mathcal{P}_E(G \setminus T)$. Hence, $T$ is a cut-set of $\c$.\\

Now by {\rm \cite[Theorem 1]{a.bera2021connectivity}}, $\kappa(\mathcal{P}_E(P_k))=1$ and $\{e\}$ is the only cut-set of $\mathcal{P}_E(P_k)$. Thus, above claim follows that the set $Q$ is the only minimum cut-set of $\c$. Hence, $\kappa(\c)=\frac{n}{p_k^{\lambda _k}}$.\\
Now suppose that $P_k=Q_{2^\alpha}$ is a generalized quaternion group. Note that the center   $Z(Q_{2^{\alpha}})$ of $Q_{2^{\alpha}}$ is contained in every maximal cyclic subgroup of $Q_{2^{\alpha}}$. Consequently, $Q'$ is contained in every maximal cyclic subgroup of $G$ [cf. Lemma \ref{on connectivity}]. Thus, by Lemma \ref{intersection lemma}, $Q'$ is contained in every cut-set of $\c$. By claim, $Q'$ is a cut-set of $\c$. Hence, $Q'$ is the only minimum cut-set of $\c$ and so $\kappa(\c)=\frac{n}{2^{\lambda _1-1}}$.
\end{proof}

\section{The Wiener index of $\c$}

In this section, we study the Wiener index of $\c$, where $G$ is a finite nilpotent group. We obtain a lower bound and an upper bound of $W(\c)$. We also characterize all the finite nilpotent groups attaining these bounds. Define
\begin{itemize}
    \item $S_{0,i}=\{(x,x): x\in \  P_i\}$.
    \item $S_{1,i}=\{(x,y): x\sim y \ \text{in} \ \a \}$.
    \item $S_{2,i}=\{(x,y): x\nsim y \ \text{in} \ \a \}$ such that $|S_{2,i}|=m_i$.
\end{itemize}
and 
\begin{itemize}
    \item $S_0=\{(x,x): x\in \  G\}$.
    \item $S_1=\{(x,y): x\sim y \ \text{in} \  \c \}$.
    \item $S_2=\{(x,y): x\nsim y \ \text{in} \  \c \}$.
\end{itemize}
Then by the definition of Wiener index, we have 
$$W(\c)=\frac{|S_1|+2|S_2|}{2}.$$
Now we obtain the Wiener index of $\c$, where $G$ is a nilpotent group.
\begin{theorem}{\label{winer index formula}}
Let $G$ be a nilpotent group of order $n=p_1^{\lambda _1}p_2^{\lambda _2}\cdot \cdot \cdot p_r^{\lambda _r}$. Then \[W(\c)=\frac{2n^2-n-\prod\limits_{i=1}^r(p_i^{2\lambda _i}-m_i)}{2}.\] 
\end{theorem}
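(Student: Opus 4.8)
The plan is to compute $|S_1|$ and $|S_2|$ by exploiting the graph isomorphism $\c \cong \boxtimes_{i=1}^r \a$ established in Theorem \ref{isomorphism theorem}. By the formula just stated, $W(\c) = \frac{|S_1| + 2|S_2|}{2}$, and since $|S_0| + |S_1| + |S_2| = n^2$ (every ordered pair $(x,y)$ falls into exactly one of the three types, with $|S_0| = n$), I can rewrite the target as $W(\c) = \frac{(n^2 - n - |S_2|) + 2|S_2|}{2} = \frac{n^2 - n + |S_2|}{2}$. Comparing with the claimed formula $\frac{2n^2 - n - \prod_i(p_i^{2\lambda_i} - m_i)}{2}$, it suffices to prove the single identity
\[
|S_2| = n^2 - \prod_{i=1}^r \bigl(p_i^{2\lambda_i} - m_i\bigr).
\]
So the whole theorem reduces to counting ordered pairs of non-adjacent vertices in the strong product.

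The key step is to count instead the complementary quantity $|S_0| + |S_1|$, the number of adjacent-or-equal ordered pairs, since adjacency in a strong product factors coordinatewise and this should multiply cleanly. By Lemma \ref{isomorphic lemma}, $x \sim y$ or $x = y$ in $\c$ precisely when, in each coordinate $i$, either $x_i = y_i$ or $x_i \sim y_i$ in $\a$; that is, $(x_i, y_i) \in S_{0,i} \cup S_{1,i}$ for every $i$. Therefore the ordered pairs contributing to $S_0 \cup S_1$ are exactly the $r$-tuples of coordinatewise pairs each lying in $S_{0,i} \cup S_{1,i}$, giving
\[
|S_0| + |S_1| = \prod_{i=1}^r \bigl(|S_{0,i}| + |S_{1,i}|\bigr).
\]
Now $|P_i| = p_i^{\lambda_i}$, so $|S_{0,i}| + |S_{1,i}| + |S_{2,i}| = p_i^{2\lambda_i}$, and since $|S_{2,i}| = m_i$ by definition, each factor equals $p_i^{2\lambda_i} - m_i$. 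Hence $|S_0| + |S_1| = \prod_i (p_i^{2\lambda_i} - m_i)$, and consequently $|S_2| = n^2 - \prod_i(p_i^{2\lambda_i} - m_i)$, which is exactly the identity needed. Substituting back yields the stated formula.

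The main obstacle, and the point requiring the most care, is justifying that adjacency-or-equality factors as a product over the Sylow components and that the coordinatewise pairs range independently; this is precisely where Lemma \ref{isomorphic lemma} (via the isomorphism $\psi$) does the heavy lifting, translating the condition ``$x_i \sim y_i$ whenever $x_i \ne y_i$'' into the clean membership $(x_i,y_i) \in S_{0,i} \cup S_{1,i}$ for all $i$. One should be slightly careful that the strong-product adjacency condition (``$u_i = v_i$ or $u_i \sim v_i$ for each $i$'') is exactly the disjoint union $S_{0,i} \cup S_{1,i}$ in each coordinate, and that distinct global tuples correspond to distinct pairs $(x,y)$ under the bijection $x \leftrightarrow (x_1,\dots,x_r)$ guaranteed by nilpotency; both are immediate but deserve an explicit sentence. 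The remainder is bookkeeping: confirming $n^2 = \prod_i p_i^{2\lambda_i}$ and assembling $\frac{n^2 - n + |S_2|}{2} = \frac{2n^2 - n - \prod_i(p_i^{2\lambda_i} - m_i)}{2}$.
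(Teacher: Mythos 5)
Your proposal is correct and follows essentially the same route as the paper: both arguments use Lemma \ref{isomorphic lemma} to factor the count of adjacent-or-equal ordered pairs as $\prod_{i=1}^r(|S_{0,i}|+|S_{1,i}|)=\prod_{i=1}^r(p_i^{2\lambda_i}-m_i)$, deduce $|S_2|=n^2-\prod_{i=1}^r(p_i^{2\lambda_i}-m_i)$, and substitute into $W(\c)=\frac{|S_1|+2|S_2|}{2}$. The only difference is cosmetic bookkeeping (you solve for $|S_2|$ first, the paper computes $|S_1|$ first).
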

\begin{proof}
By Theorem \ref{isomorphism theorem}, note that $S_1=\{(x,y): \ \text{either} \ x_i=y_i \ \text{or} \ x_i\sim y_i \ \text{in}\  \a \} \setminus S_0 $. Let $m_i=|S_{2,i}|$. Then
\begin{align*}
|S_1|& =\prod\limits_{i=1}^r(|S_{1,i}|+|S_{0,i}|)-n\\
&=\prod\limits_{i=1}^r(p_i^{2\lambda _i}-m_i-p_i^{\lambda _i}+p_i^{\lambda _i})-n\\
&=\prod\limits_{i=1}^r(p_i^{2\lambda _i}-m_i)-n
 \end{align*}
 and 
 $|S_2|=n^2-|S_0|-|S_1|=n^2-\prod\limits_{i=1}^r(p_i^{2\lambda _i}-m_i).$
 Hence, $W(\c)=\frac{2n^2-n-\prod\limits_{i=1}^r(p_i^{2\lambda _i}-m_i)}{2}$.
\end{proof}

\begin{corollary}{\label{wi comparision}}
Let $G, G'$ be nilpotent groups such that $|G|=|G'|=p_1^{\lambda _1}p_2^{\lambda _2}\cdot \cdot \cdot p_r^{\lambda _r}$. If $m_i\leq m'_i$  for all  $i\in [r]$, then $W(\c)\leq W(\mathcal{P}_E(G').$
\end{corollary}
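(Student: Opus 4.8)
The plan is to derive the inequality straight from the closed formula of Theorem \ref{winer index formula}, since both groups share the same order $n=p_1^{\lambda_1}p_2^{\lambda_2}\cdots p_r^{\lambda_r}$. Writing $m_i$ for the quantity $|S_{2,i}|$ attached to $G$ and $m_i'$ for the corresponding quantity attached to $G'$, Theorem \ref{winer index formula} gives
\[
W(\c)=\frac{2n^2-n-\prod\limits_{i=1}^r(p_i^{2\lambda_i}-m_i)}{2},\qquad
W(\mathcal{P}_E(G'))=\frac{2n^2-n-\prod\limits_{i=1}^r(p_i^{2\lambda_i}-m_i')}{2}.
\]
Subtracting, the common terms $2n^2-n$ cancel and the whole comparison collapses to
\[
W(\mathcal{P}_E(G'))-W(\c)=\frac{1}{2}\left(\prod_{i=1}^r(p_i^{2\lambda_i}-m_i)-\prod_{i=1}^r(p_i^{2\lambda_i}-m_i')\right),
\]
so it suffices to show that the first product is at least the second.

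The next step is to compare the two products factorwise. For each $i$ the hypothesis $m_i\le m_i'$ yields immediately $p_i^{2\lambda_i}-m_i\ge p_i^{2\lambda_i}-m_i'$, so each factor of the first product dominates the corresponding factor of the second. To pass from a factorwise inequality to an inequality of the products, I first record that every factor is non-negative: since $p_i^{2\lambda_i}=|P_i|^2$ counts all ordered pairs from $P_i$ while $m_i=|S_{2,i}|$ counts only the non-adjacent ordered pairs, we have $p_i^{2\lambda_i}-m_i=|S_{0,i}|+|S_{1,i}|\ge |S_{0,i}|=p_i^{\lambda_i}>0$, and likewise $p_i^{2\lambda_i}-m_i'\ge p_i^{\lambda_i}>0$. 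With all factors strictly positive, the elementary monotonicity of finite products of non-negative reals (if $0\le b_i\le a_i$ for all $i$ then $\prod_i b_i\le\prod_i a_i$) applies with $a_i=p_i^{2\lambda_i}-m_i$ and $b_i=p_i^{2\lambda_i}-m_i'$, giving $\prod_i(p_i^{2\lambda_i}-m_i)\ge\prod_i(p_i^{2\lambda_i}-m_i')$. Hence $W(\mathcal{P}_E(G'))-W(\c)\ge 0$, which is the claim.

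I do not anticipate a genuine obstacle here, as the result is an arithmetic consequence of the Wiener-index formula. The only point meriting explicit attention is the non-negativity of the factors $p_i^{2\lambda_i}-m_i$; without it, a factorwise inequality would not transfer to the product. I would therefore foreground the combinatorial interpretation $p_i^{2\lambda_i}-m_i=|S_{0,i}|+|S_{1,i}|$ to make the positivity transparent, and then invoke the product-monotonicity lemma to finish. (The displayed statement's missing closing parenthesis in $W(\mathcal{P}_E(G')$ is a typographical slip; the intended conclusion is $W(\c)\le W(\mathcal{P}_E(G'))$.)
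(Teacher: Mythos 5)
Your proposal is correct and follows exactly the route the paper intends: the corollary is stated as an immediate consequence of Theorem \ref{winer index formula}, and your factorwise comparison of the products $\prod_i(p_i^{2\lambda_i}-m_i)$ is the implicit argument. Your extra care in verifying that each factor equals $|S_{0,i}|+|S_{1,i}|\ge p_i^{\lambda_i}>0$, so that the factorwise inequality transfers to the products, is a detail the paper glosses over but is needed for the deduction to be airtight.
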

\begin{lemma}{\label{mi}}
Let $G$ be a $p$-group. Then $|S_2|\leq (o(G)-p)(o(G)-1)$.
\end{lemma}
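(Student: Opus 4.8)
The plan is to translate the asserted upper bound on $|S_2|$ into an equivalent lower bound on $|S_1|$, which can then be settled by a short degree count. Since every ordered pair $(x,y)$ with $x \neq y$ lies in exactly one of $S_1$ or $S_2$, and $|S_0| = o(G)$, we have the identity $|S_1| + |S_2| = o(G)^2 - o(G) = o(G)(o(G)-1)$. Expanding the target gives $(o(G)-p)(o(G)-1) = o(G)(o(G)-1) - p(o(G)-1)$, so the desired inequality $|S_2| \leq (o(G)-p)(o(G)-1)$ is equivalent to $|S_1| \geq p(o(G)-1)$. Thus it suffices to prove this lower bound on $|S_1|$.

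Next I would express $|S_1|$ as a sum of degrees. Since $S_1$ consists of ordered pairs of distinct adjacent vertices, each edge $\{x,y\}$ of $\c$ contributes the two pairs $(x,y)$ and $(y,x)$, whence $|S_1| = \sum_{x \in G} \deg(x)$. I would then bound these degrees from below. The identity $e$ lies in every cyclic subgroup and so is adjacent to all remaining vertices, giving $\deg(e) = o(G)-1$. For any non-identity $x$, its order is a positive power of $p$, so $\langle x \rangle$ has order at least $p$; as all elements of $\langle x \rangle$ are pairwise adjacent, $x$ is adjacent to the other $|\langle x \rangle| - 1 \geq p-1$ of them, whence $\deg(x) \geq p-1$. (Equivalently, $\deg(x)\geq p-1$ follows from $\delta(\c)\geq p-1$, a consequence of Theorem \ref{minimum-degree}.) Summing over the $o(G)-1$ non-identity vertices and adding the contribution of $e$ yields
\[
|S_1| = \deg(e) + \sum_{x \neq e} \deg(x) \geq (o(G)-1) + (o(G)-1)(p-1) = p(o(G)-1),
\]
which is precisely the reduced inequality, completing the argument.

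The step I expect to be the main obstacle is obtaining a degree bound strong enough to close the gap. Applying only the uniform estimate $\deg(x) \geq p-1$ to all $o(G)$ vertices gives $|S_1| \geq o(G)(p-1)$, which falls just short of $p(o(G)-1)$ as soon as $o(G) > p$. The essential refinement is to isolate the identity, whose degree is the full $o(G)-1$ rather than merely $p-1$; the surplus $(o(G)-1)-(p-1) = o(G)-p$ exactly supplies the deficit and delivers the sharp bound.
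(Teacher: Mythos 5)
Your proof is correct and is essentially the paper's argument in complementary form: the paper bounds $|S_2|$ directly by noting that the identity has no non-neighbours and each non-identity element (having order at least $p$) has at most $o(G)-p$ non-neighbours, which is exactly the dual of your degree count $\deg(e)=o(G)-1$ and $\deg(x)\geq p-1$ combined with $|S_1|+|S_2|=o(G)(o(G)-1)$.
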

\begin{proof}
Let $x\neq e \in G$. Since $G$ is a $p$-group, we have $o(x)\geq p$. The number of elements at distance $2$ from $x$ is atmost $o(G)-p$. Since the identity element is adjacent to all other vertices in $\c$ and $S_2=\{(x,y): x\nsim y \ \text{in} \ \c \}$, we have $|S_2|\leq (o(G)-p)(o(G)-1).$
\end{proof}
In view of Theorem \ref{winer index formula} and Lemma \ref{mi}, we have the following corollary.
\begin{corollary}{\label{wi cor}}
Let $G$ be a $p$- group. Then $W(\c)\leq \frac{(o(G)-1)(2o(G)-p)}{2}.$ 
\end{corollary}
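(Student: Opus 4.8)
The plan is to combine the exact Wiener index formula from Theorem~\ref{winer index formula} with the bound on $|S_2|$ from Lemma~\ref{mi}. Recall that the Wiener index is written as
\[
W(\c)=\frac{|S_1|+2|S_2|}{2},
\]
and since $|S_0|+|S_1|+|S_2|=n^2$ (where $n=o(G)$ counts every ordered pair $(x,y)$), we have $|S_1|=n^2-n-|S_2|$, because $|S_0|=n$. Substituting this gives
\[
W(\c)=\frac{(n^2-n-|S_2|)+2|S_2|}{2}=\frac{n^2-n+|S_2|}{2}.
\]
Thus $W(\c)$ is an increasing function of $|S_2|$, so an upper bound on $|S_2|$ translates directly into an upper bound on $W(\c)$.

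Next I would simply insert the estimate from Lemma~\ref{mi}, namely $|S_2|\le (o(G)-p)(o(G)-1)=(n-p)(n-1)$, into the displayed expression. This yields
\[
W(\c)\le \frac{n^2-n+(n-p)(n-1)}{2}.
\]
The final step is the routine algebraic simplification of the numerator: expanding $(n-p)(n-1)=n^2-n-pn+p$ and adding $n^2-n$ gives $2n^2-2n-pn+p=(n-1)(2n-p)$. Hence
\[
W(\c)\le \frac{(o(G)-1)(2o(G)-p)}{2},
\]
which is exactly the claimed bound.

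There is essentially no obstacle here, since this corollary is a direct consequence of results already established: the heavy lifting was done in Theorem~\ref{winer index formula} (the isomorphism-based count of $|S_1|$ and $|S_2|$) and in Lemma~\ref{mi} (the distance-$2$ counting argument using that every nonidentity element of a $p$-group has order at least $p$ and that the identity is universally adjacent). The only point requiring minor care is the bookkeeping converting between $|S_1|$ and $|S_2|$ via the relation $|S_1|+|S_2|=n^2-n$, and then the arithmetic factoring $n^2-n+(n-p)(n-1)=(n-1)(2n-p)$; both are elementary. I would present the argument in this two-line form rather than re-deriving the formula for $W(\c)$ from scratch.
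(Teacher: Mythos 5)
Your proposal is correct and follows essentially the same route as the paper, which likewise derives the corollary by combining Theorem~\ref{winer index formula} (equivalently, the identity $W(\mathcal{P}_E(G))=\frac{n^2-n+|S_2|}{2}$ for a $p$-group) with the bound $|S_2|\le (n-p)(n-1)$ from Lemma~\ref{mi}. The algebraic simplification $n^2-n+(n-p)(n-1)=(n-1)(2n-p)$ checks out.
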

For the nilpotent group $G$, now we give a sharp lower bound and an upper bound of $W(\c)$ (independent from $m_i$) in the following theorem.
\begin{theorem}
Let $G$ be a nilpotent group of order $n=p_1^{\lambda _1}p_2^{\lambda _2}\cdot \cdot \cdot p_r^{\lambda _r}$. Then 
\begin{itemize}
    \item[(i)] \[\frac{n(n-1)}{2}\leq W(\c)\leq \frac{2n^2-n-\prod\limits_{i=1}^r(p_i^{\lambda _i+1} +  \; p_i^{\lambda _i} \; - \; p_i)}{2}.\]
    \item[(ii)] $W(\c)$ attains its lower bound if and only if $G$ is cyclic.
    \item[(iii)] $W(\c)$ attains its upper bound if and only if $|M| = p_1p_2\cdots p_r$ for every $M\in \m$.
\end{itemize}

\end{theorem}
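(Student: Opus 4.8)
The plan is to reduce the entire statement to the single product factor $\prod_{i=1}^r(p_i^{2\lambda_i}-m_i)$ appearing in the closed formula of Theorem \ref{winer index formula}, namely $W(\c)=\frac{2n^2-n-\prod_{i=1}^r(p_i^{2\lambda_i}-m_i)}{2}$. Since $n=p_1^{\lambda_1}\cdots p_r^{\lambda_r}$, we have $n^2=\prod_{i=1}^r p_i^{2\lambda_i}$, so bounding $W(\c)$ from below (respectively, above) is equivalent to bounding this product from above (respectively, below). Each factor is a positive integer, so the principle I would invoke throughout is that, for positive reals, a term-by-term inequality $a_i\le b_i$ yields $\prod a_i\le\prod b_i$, with equality if and only if $a_i=b_i$ for every $i$.

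For the lower bound in (i) and the characterization (ii), I would note that $m_i=|S_{2,i}|\ge 0$, whence $p_i^{2\lambda_i}-m_i\le p_i^{2\lambda_i}$ and thus $\prod_{i=1}^r(p_i^{2\lambda_i}-m_i)\le n^2$; substituting gives $W(\c)\ge\frac{2n^2-n-n^2}{2}=\frac{n(n-1)}{2}$. Equality forces $m_i=0$ for every $i$, i.e.\ $\a$ is complete for each $i$, which by Theorem \ref{complete} is equivalent to every $P_i$ being cyclic. As $G$ is nilpotent and hence the direct product of its Sylow subgroups of pairwise coprime orders, this holds if and only if $G$ itself is cyclic, giving (ii).

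For the upper bound in (i) and the characterization (iii), I would apply Lemma \ref{mi} to each Sylow subgroup $P_i$, a $p_i$-group of order $p_i^{\lambda_i}$, obtaining $m_i\le(p_i^{\lambda_i}-p_i)(p_i^{\lambda_i}-1)=p_i^{2\lambda_i}-p_i^{\lambda_i+1}-p_i^{\lambda_i}+p_i$. Rearranging yields the crucial per-factor bound $p_i^{2\lambda_i}-m_i\ge p_i^{\lambda_i+1}+p_i^{\lambda_i}-p_i$, and multiplying over $i$ delivers the stated upper bound. Since each factor $p_i^{\lambda_i+1}+p_i^{\lambda_i}-p_i$ is positive, the product principle shows equality holds if and only if it holds factorwise, i.e.\ Lemma \ref{mi} is tight for each $P_i$. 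Tightness there requires every non-identity $x\in P_i$ to have order exactly $p_i$ and to be adjacent only to the elements of $\langle x\rangle$, which is precisely the condition that $P_i$ has exponent $p_i$. Finally, by Lemma \ref{on connectivity} every $M\in\m$ has the form $M_1\cdots M_r$ with $M_j$ a maximal cyclic subgroup of $P_j$, so $|M|=\prod_j|M_j|$; the condition that each $P_i$ has exponent $p_i$ is equivalent to every maximal cyclic subgroup of $P_i$ having order $p_i$, hence to $|M|=p_1p_2\cdots p_r$ for every $M\in\m$, establishing (iii).

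The main obstacle I anticipate is the equality analysis for (iii): one must verify carefully that tightness in Lemma \ref{mi} for $P_i$ is exactly exponent $p_i$, checking both that order $p_i$ is necessary and that, once the exponent is $p_i$, no adjacency outside $\langle x\rangle$ can occur (so the degree of each $x\ne e$ is exactly $p_i-1$), and then translating this Sylow-wise condition into the global statement about $\m$ via Lemma \ref{on connectivity}. The remaining steps are the routine positivity checks guaranteeing that the product-equality-implies-factor-equality principle applies in both bounds.
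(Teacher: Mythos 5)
Your proposal is correct and follows essentially the same route as the paper: both rest on the product formula of Theorem \ref{winer index formula}, apply Lemma \ref{mi} Sylow-wise to get the per-factor bound $p_i^{2\lambda_i}-m_i\ge p_i^{\lambda_i+1}+p_i^{\lambda_i}-p_i$, characterize tightness as each $P_i$ having all maximal cyclic subgroups of order $p_i$, and globalize via Lemma \ref{on connectivity}. The only cosmetic difference is your lower bound, obtained from $m_i\ge 0$ inside the formula rather than from the paper's observation that the complete graph minimizes the Wiener index; both are immediate and yield the same equality case.
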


\begin{proof} (i)-(ii)
 From Lemma \ref{mi}, we obtain $m_i\leq (p^{\lambda_i}-p)(p^{\lambda_i}-1)$ for all $i\in [r]$. Consequently, by Theorem \ref{winer index formula} and Corollary \ref{wi comparision}, we get $W(\c)\leq \frac{2n^2-n-\prod\limits_{i=1}^r(p_i^{\lambda _i+1}+p_i^{\lambda _i}-p_i)}{2}.$ Notice that $W(\c)$ is smallest if and only if  $\c$ is complete if and only if $G$ is cyclic (cf. Theorem \ref{complete}). Since the Wiener index of the complete graph on $n$ vertices is $\frac{n(n-1)}{2}$, we obtain $\frac{n(n-1)}{2}\leq W(\c)$.

(iii) By Theorem \ref{winer index formula}, observe that $W(\c)$ is maximum if and only if $m_i$ is maximum for all $i\in [r]$. First, we prove that $m_i$ is maximum if and only if $|M'|=p_i$ for every $M'\in \mathcal{M}(P_i)$.

 For simplicity, we write here $p_i=p$ and $\lambda _i=\lambda$ so that $m_i\leq (p^{\lambda}-p)(p^\lambda -1)$. Now let $|M'|=p$ for every $M'\in \mathcal{M}(P_i)$. Then for any non-identity element $x\in P_i$, $o(x)=p$. Since $x$ is a generator of a maximal cyclic subgroup $H$ of $P_i$ note that $x \in H$ only. It follows that $x$ is adjacent to $p-1$ vertices of $\mathcal{P}_E(P_i)$. Consequently, $x$ is at distance $2$ from $p^\lambda -p$ vertices of $\mathcal{P}_E(P_i)$. Since $x$ is an arbitrary non-identity element of $P_i$, we have $m_i=(p^\lambda -1)(p^\lambda -p)$. Thus $m_i$ is maximum. Conversely, suppose that the $m_i$ is maximum. On contrary, suppose $|M'|=p^\alpha$ for some $\alpha \geq 2$ and $M'\in \mathcal{M}(P_i)$. Further, assume that $x\in P_i$. Clearly, $o(x)\geq p$. If $x\in M'$ then $x$ is adjacent to at least $p^\alpha -1$ vertices of $\mathcal{P}_E(P_i)$ and so at most $p^\lambda - p^\alpha$ vertices are at distance $2$ from $x$ in $\mathcal{P}_E(P_i)$. Similarly, If $x\in P_i\setminus M'$ then there are at most $p^\lambda -p$ elements at distance $2$ from $x$ in $\mathcal{P}_E(P_i)$. Consequently, we get $m_i\leq (p^\alpha -1)(p^\lambda -p^\alpha)+(p^\lambda -p)(p^\lambda - p^\alpha )< (p^\lambda -1)(p^\lambda -p)$; a contradiction. Hence, $m_i$ is maximum if and only if $|M'|=p_i$ for all $M'\in \mathcal{M}(P_i)$. \\
Thus, By Lemma \ref{on connectivity}, we get $W(\c)$ is maximum if and only if $|M| = p_1p_2\cdots p_r$ for every $M\in \m$.
\end{proof}

Note that the given upper bound is tight and it is attained by the group $G=\mathbb{Z}_{p_1}^{\lambda _1}\times \mathbb{Z}_{p_2}^{\lambda _2} \times \cdot \cdot \cdot \times \mathbb{Z}_{p_r}^{\lambda _r}$. Moreover, in this case, the graph $\c$ has minimum number of edges. \\

\section*{Declarations}

\textbf{Funding}: The first author gratefully acknowledge for providing financial support to CSIR  (09/719(0110)/2019-EMR-I) government of India. 

\vspace{.3cm}
\textbf{Conflicts of interest/Competing interests}: There is no conflict of interest regarding the publishing of this paper. 

\vspace{.3cm}
\textbf{Availability of data and material (data transparency)}: Not applicable.

\vspace{.3cm}
\textbf{Code availability (software application or custom code)}: Not applicable.

\vspace{1cm}

\noindent
{\bf Parveen\textsuperscript{\normalfont 1}, Jitender Kumar\textsuperscript{\normalfont 1}, Siddharth Singh\textsuperscript{\normalfont 1}, Xuanlong Ma \textsuperscript{\normalfont 2}}

\bigskip

\noindent{\bf Addresses}:

\vspace{5pt}

\noindent
\textsuperscript{\normalfont 1}Department of Mathematics, Birla Institute of Technology and Science Pilani, Pilani-333031, India.

\vspace{5pt}

\noindent	
\textsuperscript{\normalfont 2}School of Science, Xi’an Shiyou University, Xi’an 710065, China.

\bigskip


\end{document}